\newtheorem{theorem}{Theorem}
\newtheorem{corollary}[theorem]{Corollary}
\newtheorem{definition}[theorem]{Definition}
\newtheorem{lemma}[theorem]{Lemma} 
\newtheorem{proposition}[theorem]{Proposition}
\newtheorem{remark}[theorem]{Remark}
\newcommand{\eps}{\varepsilon}
 \newcommand{\RR}{{\mathbb R}}
\title[Uniform controllability of a fourth-order equation]{Uniform null controllability of a fourth-order parabolic equation with a transport term}   
\author[M. L\'{o}pez-Garc\'{i}a and A. Mercado]{} 
 \subjclass{93B05, 93C20, 35B25, 35K35}
 \keywords{Uniform null controllability, fourth-order parabolic equation, transport term.}
\thanks{
The first author was partially supported by project PAPIIT IN100919 of DGAPA, UNAM.
The second author  was partially supported  by FONDECYT (Chile) grant  1171712 and  BASAL Project, CMM - U. de Chile.}
\email{marcos.lopez@im.unam.mx}
\email{alberto.mercado@usm.cl }
\begin{document}  

\maketitle 
\centerline{\scshape  Marcos L\'opez-Garc\'{\i}a }
\medskip
{\footnotesize

 \centerline{Instituto de Matem\'{a}ticas-Unidad Cuernavaca}
   \centerline{Universidad Nacional Aut\'{o}noma de M\'{e}xico}
   \centerline{ Apdo. Postal 273-3, Cuernavaca Mor. CP 62251, M\'exico}
}
\medskip

\centerline{\scshape Alberto Mercado}
\medskip
{\footnotesize
 \centerline{  Departamento de Matem\'{a}tica}
 \centerline{ Universidad T\'{e}cnica Federico Santa Mar\'{i}a,  
 Casilla 110-V, Valpara\'{i}so, Chile; }
 \centerline{ Institut de Math\'ematiques de Toulouse, UMR 5219,
 France}
}

%
%

\begin{abstract}   
In this paper we prove a uniform controllability result for a fourth order parabolic partial differential equation which includes a transport term, 
when  the coefficients of higher order terms vanish.
We prove the null controllability of the system with a single boundary control, 
and also it is obtained that the cost of the control decreases to zero, 
under the hypothesis of the control time being large enough.
Moreover, we prove that, if the control time is small enough, the cost  of controllability increases to infinity. 
\end{abstract}

\section{Introduction}
The controllability  of parabolic second order partial differential equations   has deserved  a lot of attention in the literature.
The boundary null controllability  for the heat equation was  proved by Fattorini and Russell in \cite{FR} using the moment method. 
Afterward, the distributed null controllability of the heat equation in higher dimensions was proved  by  Lebeau and Robbiano \cite{LR}  and Fursikov and Imanuvilov  \cite{FI}. Since these seminal results, a large number of articles have been devoted to those  subjects. 

Fourth-order parabolic equations  have also been studied recently. 
In  \cite{C} the  boundary controllability for the one-dimensional case is proved  using the moment method and  \cite{CM} 
is devoted to the boundary controllability 
of a nonlinear fourth order parabolic equation using  global Carleman estimates.
We mention also  \cite{GK},  \cite{LRR} and \cite{Tak} for results in higher dimensions.

The problem we deal with  in this paper consists in the estimation of the cost of null controllability of a 
fourth order parabolic partial differential equation when the   coefficients of the  higher order terms vanish.
An analogous 
problem for second order equations  has been considered in several works:
For  the heat equation with vanishing viscosity coefficient, we can cite 
 \cite{CoronG}, \cite{LebGuerr}, where are used Carleman estimates and  energy  estimates. These results were improved in 
  \cite{ErvDard}, \cite{Glass},  \cite{Lissy2015} and  \cite{Lissy2017},  using different techniques, mainly the moment method.  
  In particular, in  \cite{Glass} some complex-analytic methods are used in order to obtain uniform controllability. 

Concerning the cost of controllability for fourth order equations, as far as we know, the only reference is \cite{CaG}, where it is studied 
a parabolic equation posed in $(0,L) \times (0,T)$, and composed by a transport term with
constant  velocity $M$ and a  fourth order term with vanishing viscosity. 
It is proved that, under the hypothesis
$ T \geq  40 L/|M|$
and using two boundary controls,  
the cost of controllability remains uniformly bounded with respect to the diffusion coefficient. 
The main tools used in that article are Carleman estimates and diffusion estimates. 

The objective of this work consists in adapting  the   approach introduced in \cite{Glass}, in order to obtain uniform controllability of a  fourth order parabolic system, using a single boundary control.

More precisely, given $\varepsilon > 0$,  we  consider the  system 
\begin{equation}
\label{EqCon}
\left\{\begin{array}{rl}
y_{t}+ \varepsilon y_{xxxx}  +  \delta y_{xxx} + My_x =0, & (t,x)\in (0,T)\times(0,L), \\
y(t,0)= 0, \hspace{0.1cm} y(t,L)=0, & t\in(0,T), \\
By(t,0)= u(t), \hspace{0.1cm} By (t,L) =0, & t\in(0,T), \\
y(0,x)=y_{0}(x), & x\in(0,L),
\end{array}\right.
\end{equation}
where   
\begin{equation} \label{del}
\delta = \delta(\varepsilon,M):=- 2 \varepsilon^{2/3} M^{1/3}
\end{equation} 
and 
\begin{equation} \label{Bda}
By :=   \varepsilon y_{xx} + \frac \delta2 y_{x}
=  \varepsilon y_{xx} -  \varepsilon^{2/3} M^{1/3}.
\end{equation}

The precise value of $\delta$ in the system above allow us to obtain an explicit sequence of eigenfunctions of the adjoint system (see equation \eqref{funprop} below).

In the next result we state the null-controllability of the system for each $\varepsilon>0$.

\begin{theorem}\label{Teo0}
Given any $M \in \RR^*=\RR \backslash \{0\}$, 
and for each $L, T, \varepsilon > 0$, system \eqref{EqCon} is null-controllable in $L^2(0,L)$. 
This is, for each $y_0\in L^2(0,L)$ there exists a control  $u\in L^2(0,T)$ such that the corresponding solution $y$ of \eqref{EqCon} 
 satisfies $y(T,\cdot)=0$. 
 \end{theorem}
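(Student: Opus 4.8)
The plan is to establish null controllability via the classical duality approach: null controllability of \eqref{EqCon} with a single boundary control $u \in L^2(0,T)$ is equivalent to an observability inequality for the adjoint system. Concretely, letting $\varphi$ solve the backward adjoint problem associated with \eqref{EqCon} (with the formal adjoint of the operator $\partial_t + \varepsilon\partial_x^4 + \delta\partial_x^3 + M\partial_x$ and the adjoint boundary conditions dictated by integration by parts against the form $B$), one must show that there is a constant $C = C(\varepsilon, L, T, M)$ with
\begin{equation}
\label{ObsIneq}
\|\varphi(0,\cdot)\|_{L^2(0,L)}^2 \leq C \int_0^T |b\varphi(t,\cdot)|^2\, dt,
\end{equation}
where $b\varphi$ denotes the boundary trace (at the appropriate endpoint) through which the single control acts. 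The first step is therefore to carry out the integration-by-parts computation identifying the adjoint system and the correct observation operator $b$; the specific choice \eqref{del} of $\delta$ and the operator $B$ in \eqref{Bda} are designed precisely so that this adjoint system has an explicit eigenbasis, referred to as \eqref{funprop}.

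The second step is spectral: I would diagonalize the (non-self-adjoint) spatial operator of the adjoint system. Because $\delta$ is tuned as in \eqref{del}, a change of unknown of the form $\varphi = e^{\alpha x}\psi$ with a suitable constant $\alpha$ (depending on $\varepsilon$ and $M$) should eliminate the third-order term and reduce the operator to one with an explicit, tractable eigenvalue problem — producing eigenfunctions $\{\phi_k\}$ (essentially sines, up to the exponential weight) with eigenvalues $\{\lambda_k\}$ growing like $\varepsilon k^4$, and a biorthogonal family for the dual operator. One then verifies that $\{\phi_k\}$ forms a Riesz basis of $L^2(0,L)$ (after the weight), so that any $y_0$, hence any adjoint initial datum, can be expanded in this basis.

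The third step is the moment problem / observability estimate. Expanding $\varphi(T-t,\cdot) = \sum_k a_k e^{-\lambda_k t}\phi_k$, the observability inequality \eqref{ObsIneq} follows once we control the boundary traces $b\phi_k$ from below (they should not vanish and should be polynomially bounded in $k$) and invoke the existence of a biorthogonal family to the exponentials $\{e^{-\lambda_k t}\}$ in $L^2(0,T)$ with controlled norms — this is standard given the gap condition $\lambda_{k+1} - \lambda_k \gtrsim \varepsilon k^3 \to \infty$ and the summability of $1/\lambda_k$. Equivalently one solves directly the moment problem for the control $u$. For a fixed $\varepsilon > 0$ all constants are finite, which is all Theorem \ref{Teo0} requires (the uniformity in $\varepsilon$ is the subject of the later, harder results announced in the abstract).

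The main obstacle I anticipate is not the abstract machinery but the concrete spectral analysis of the non-self-adjoint fourth-order operator: verifying that the explicit family \eqref{funprop} is complete and forms a Riesz basis in the weighted space, computing its biorthogonal family, and obtaining two-sided bounds on the boundary traces $b\phi_k$ that are uniform enough in $k$ to close \eqref{ObsIneq}. A secondary technical point is checking well-posedness of \eqref{EqCon} in $L^2(0,L)$ with the inhomogeneous boundary data $u(t)$ — this requires a transposition (very weak solution) framework, since $u \in L^2(0,T)$ is too rough for classical trace theory; one defines solutions by duality against the adjoint system and shows the solution map $y_0 \mapsto y(T,\cdot)$ extends continuously, which is exactly what makes the duality with \eqref{ObsIneq} rigorous.
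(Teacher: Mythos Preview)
Your proposal is correct and follows the same overall architecture as the paper: well-posedness by transposition, reduction to the observability inequality \eqref{inobs} via Proposition~\ref{ContObs}, explicit diagonalization of the adjoint spatial operator through the weighted sines $e_k$ of \eqref{funprop} (yielding a Riesz basis in the weighted $L^2$ space), and finally a moment/biorthogonal-family argument to recover the Fourier coefficients of $\varphi(0,\cdot)$ from the boundary observation $\varphi_x(\cdot,0)$.

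The one genuine methodological difference is in how the biorthogonal family $\{\psi_k\}$ to $\{e^{-\lambda_k(T-t)}\}$ is produced. You invoke the classical Fattorini--Russell existence result (gap condition plus $\sum 1/\lambda_k<\infty$), which is perfectly sufficient for Theorem~\ref{Teo0} since only finiteness of the observability constant at fixed $\varepsilon$ is needed. The paper instead builds $\psi_k$ explicitly by complex-analytic means: it introduces the canonical product $\Phi_\varepsilon$ with zeros at $-i\lambda_k$, corrects it by a multiplier $f$ of controlled exponential type, and applies Paley--Wiener to obtain $\psi_k$ together with the quantitative bound \eqref{burda}. This heavier construction is overkill for Theorem~\ref{Teo0} alone, but it is the whole point of the paper: the explicit estimates on $\|\psi_k\|_{L^2}$ are what later yield the uniform-in-$\varepsilon$ bound of Theorem~\ref{MainT}. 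Your route is more elementary and entirely adequate for the statement at hand; the paper's route is chosen because it feeds directly into the sharper results that follow.
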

\bigskip  

For $y_0\in L^2(0,L)$ consider the set $U( \varepsilon, T,L,M,y_0)$ of all controls $u\in L^2(0,T)$ such that the corresponding solutions $y$ of system (\ref{EqCon}) satisfy $y(T,\cdot)=0$. 
From Theorem \ref{Teo0} we have that this set is nonempty. Then we define the cost of the null controllability of system \eqref{EqCon} as 
\begin{equation} \label{costo}
K=K( \varepsilon, T,L,M):=\sup_{\|y_0\|_{L^2(0,L)}\leq 1}\left\{\inf{\|u\|_{L^2(0,T)}:u\in U( \varepsilon, T,L,M,y_0)} \right \}. 
\end{equation}

Notice that scaling arguments yield the relations
$$K\left(\eps,aT,a^{1/4}L,\frac{M}{a^{3/4}}\right)=a^{3/8}K(\eps,T,L,M)\quad \text{ and}$$  
$$a^{1/8}K\left(a\eps,T,a^{1/4}L,a^{1/4}M\right)=K(\eps,T,L,M)$$
for every $\varepsilon, L,T, a>0$ and $M\in \mathbb{R}^*.$\\
%

We recall that a transport equation is controllable if and only if $T > L/ |M|$. 
Due to the presence of the transport term in equation \eqref{EqCon},
we directly obtain,  
if $T$ does not satisfy the above inequality,   an  estimate 
for the asymptotic  cost of the control when $\varepsilon$ vanishes, as the following result shows.
See Remark \ref{powere} below regarding the 
factor
 $\frac{1}{\sqrt \varepsilon}$, 
 and Remark \ref{RemTr} for the connection with the transport equation. 
\begin{proposition}  \label{MainT3}
		Given $L,T>0$  such that  
				\begin{equation}\label{Tpeq}
		T  <  L/|M|,
		\end{equation}
		we have  that
		\begin{equation} \label{inft1}
\frac{1}{\sqrt \varepsilon} K( \varepsilon, T,L,M) \rightarrow \infty.
\end{equation}
as $\varepsilon \rightarrow 0^+$.
\end{proposition}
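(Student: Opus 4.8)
The plan is to deduce \eqref{inft1} from a duality argument in which I exhibit one solution of the adjoint system that carries a fixed amount of $L^2$–mass while its boundary observation is as small as $O(\varepsilon^{1/6})$; this is the viscous counterpart of the fact that $y_t+My_x=0$ cannot be driven to rest in time $T<L/|M|$. First I would write down the adjoint of \eqref{EqCon}: integrating by parts in space and time and using its four boundary conditions, the adjoint system is
\begin{equation}\label{adjplan}
\left\{\begin{array}{l}-\varphi_t+\varepsilon\varphi_{xxxx}-\delta\varphi_{xxx}-M\varphi_x=0,\qquad(t,x)\in(0,T)\times(0,L),\\[3pt]\varphi(t,0)=\varphi(t,L)=0,\qquad B^*\varphi(t,0)=B^*\varphi(t,L)=0,\qquad t\in(0,T),\end{array}\right.
\end{equation}
with $B^*\varphi:=\varepsilon\varphi_{xx}-\tfrac{\delta}{2}\varphi_x$, together with the identity $\int_0^Ly_0\,\varphi(0,\cdot)\,dx=\int_0^Tu(t)\,\varphi_x(t,0)\,dt$ whenever $y$ solves \eqref{EqCon} with $y(T,\cdot)=0$. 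From this identity the standard duality argument yields the observability inequality $\|\varphi(0,\cdot)\|_{L^2(0,L)}\le K(\varepsilon,T,L,M)\,\|\varphi_x(\cdot,0)\|_{L^2(0,T)}$ for every solution $\varphi$ of \eqref{adjplan}, so it suffices to produce one $\varphi$ for which the corresponding quotient blows up. Reversing time, $\psi(s,x):=\varphi(T-s,x)$ solves $\psi_s+\varepsilon\psi_{xxxx}-\delta\psi_{xxx}-M\psi_x=0$ with the same homogeneous boundary conditions, $\|\varphi(0,\cdot)\|_{L^2(0,L)}=\|\psi(T,\cdot)\|_{L^2(0,L)}$ and $\|\varphi_x(\cdot,0)\|_{L^2(0,T)}=\|\psi_x(\cdot,0)\|_{L^2(0,T)}$, so I would henceforth work with $\psi$ and prescribe its initial datum $\psi(0,\cdot)=\psi_0$.

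\textit{An almost invisible adjoint solution.} Since $T<L/|M|$, the open interval $J:=\bigcap_{s\in[0,T]}(Ms,\,Ms+L)$ — which is $(MT,L)$ if $M>0$ and $(0,L-|M|T)$ if $M<0$ — is nonempty and contained in $(0,L)$. I would fix $\psi_0\in C_c^\infty(J)$ with $\|\psi_0\|_{L^2(0,L)}=1$, let $\psi$ be the corresponding solution of the time-reversed adjoint system, and introduce the transported profile $\psi^0(s,x):=\psi_0(x+Ms)$, the solution of the transport part $\psi^0_s-M\psi^0_x=0$ with the same initial datum. Because $\mathrm{supp}\,\psi_0$ is a compact subset of $J$, the support of $\psi^0(s,\cdot)$ remains in a fixed compact subset of $(0,L)$ for every $s\in[0,T]$; consequently $\psi^0(s,\cdot)$ together with all its $x$–derivatives satisfies the homogeneous boundary conditions, $\|\psi^0(s,\cdot)\|_{L^2(0,L)}=1$, and — the crucial point — $\psi^0$ vanishes identically near $x=0$, so $\psi^0_x(s,0)=0$ for all $s\in[0,T]$.

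\textit{The remainder is small, and conclusion.} Let $w:=\psi-\psi^0$. It solves $w_s+\varepsilon w_{xxxx}-\delta w_{xxx}-Mw_x=g$, with $g:=-\varepsilon\psi^0_{xxxx}+\delta\psi^0_{xxx}$, the homogeneous boundary conditions, and $w(0,\cdot)=0$; moreover, since $\delta=-2\varepsilon^{2/3}M^{1/3}$,
\[\|g(s,\cdot)\|_{L^2(0,L)}\le\varepsilon\,\|\psi_0^{(4)}\|_{L^2(\RR)}+|\delta|\,\|\psi_0^{(3)}\|_{L^2(\RR)}=O(\varepsilon^{2/3}).\]
Multiplying the equation for $w$ by $w$ and integrating over $(0,L)$, the boundary terms collapse to $-[(B^*w)\,w_x]_{x=0}^{x=L}=0$, leaving
\begin{equation}\label{eneplan}
\tfrac12\tfrac{d}{ds}\|w(s,\cdot)\|_{L^2(0,L)}^2+\varepsilon\,\|w_{xx}(s,\cdot)\|_{L^2(0,L)}^2=\int_0^Lg\,w\,dx\le\|g(s,\cdot)\|_{L^2(0,L)}\,\|w(s,\cdot)\|_{L^2(0,L)}.
\end{equation}
From $w(0,\cdot)=0$, \eqref{eneplan} gives $\sup_{s\in[0,T]}\|w(s,\cdot)\|_{L^2(0,L)}=O(\varepsilon^{2/3})$, and integrating it in $s$ gives $\varepsilon\int_0^T\|w_{xx}(s,\cdot)\|_{L^2(0,L)}^2\,ds=O(\varepsilon^{4/3})$, hence $\int_0^T\|w_{xx}\|_{L^2(0,L)}^2\,ds=O(\varepsilon^{1/3})$; the interpolation $\|w_x\|_{L^2(0,L)}^2\le\|w\|_{L^2(0,L)}\|w_{xx}\|_{L^2(0,L)}$ then gives $\int_0^T\|w_x\|_{L^2(0,L)}^2\,ds=O(\varepsilon^{5/6})$. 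Therefore $\|\psi(T,\cdot)\|_{L^2(0,L)}\ge\|\psi^0(T,\cdot)\|_{L^2(0,L)}-\|w(T,\cdot)\|_{L^2(0,L)}=1-O(\varepsilon^{2/3})\ge\tfrac12$ for $\varepsilon$ small, while, since $\psi^0_x(\cdot,0)\equiv0$ we have $\psi_x(\cdot,0)=w_x(\cdot,0)$, so the embedding $H^1(0,L)\hookrightarrow L^\infty(0,L)$ applied to $w_x(s,\cdot)$ yields $\|\psi_x(\cdot,0)\|_{L^2(0,T)}^2\le C\int_0^T(\|w_x\|_{L^2(0,L)}^2+\|w_{xx}\|_{L^2(0,L)}^2)\,ds=O(\varepsilon^{1/3})$. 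Inserting both bounds into the observability inequality above produces a constant $c>0$ with $K(\varepsilon,T,L,M)\ge c\,\varepsilon^{-1/6}$ for all small $\varepsilon>0$, whence $\tfrac{1}{\sqrt\varepsilon}K(\varepsilon,T,L,M)\ge c\,\varepsilon^{-2/3}\to\infty$ as $\varepsilon\to0^+$, which is \eqref{inft1}.

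\textit{Expected main obstacle.} The delicate step is the control of the boundary trace $\psi_x(\cdot,0)$, where a priori a boundary layer of width $\sim\varepsilon^{1/3}$ could form and make it large. The construction is arranged so that near $x=0$ the solution coincides with the small remainder $w$ — the transported part being supported away from $x=0$ for all $s\le T$, which is exactly where the hypothesis $T<L/|M|$ enters — so the trace is governed only by the $O(\varepsilon^{2/3})$ source $g$. A second point to verify carefully is that the boundary conditions $B^*\psi=0$ are precisely what forces the cancellation of the boundary terms in \eqref{eneplan}, which is where the coupling between $\delta$ and the operator $B$ in \eqref{del}–\eqref{Bda} is used; and the duality step relies on the (standard but somewhat technical) well-posedness of \eqref{EqCon} and \eqref{adjplan} and on the parabolic smoothing that makes $\varphi_x(\cdot,0)$ meaningful.
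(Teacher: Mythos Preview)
Your argument is correct and takes a genuinely different route from the paper's. The paper argues by contradiction: assuming $\varepsilon_n^{-1/2}K(\varepsilon_n,T,L,M)$ stays bounded, it uses the well-posedness estimate \eqref{maxy} to extract a weak limit $y$ of the controlled trajectories, shows via the duality identity \eqref{transp} that $y$ is a weak solution of the pure transport equation $y_t+My_x=0$ with initial datum $y_0$ and final state $0$, and then reaches a contradiction by testing against a transported bump $\varphi(t,x)=\varphi_0(x+(T-t)M)$ supported in $(TM,L)$. Your approach instead produces an explicit near-optimal adjoint state: you transport a bump supported in the characteristic tube that never touches $x=0$, and control the parabolic remainder $w=\psi-\psi^0$ by the energy identity \eqref{ener} (the same cancellation of boundary terms under $B^*$ as in Proposition~\ref{WPad}). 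The payoff of your route is quantitative: you actually obtain $K(\varepsilon,T,L,M)\ge c\,\varepsilon^{-1/6}$, which is strictly stronger than \eqref{inft1} and gives information the paper's soft compactness argument does not. The paper's proof, on the other hand, is shorter and requires no estimate beyond the basic well-posedness bound \eqref{maxy}, making the role of the hypothesis $T<L/|M|$ (non-controllability of the limiting transport equation) completely transparent. One minor remark: in your duality identity the sign should be $\int_0^L y_0\,\varphi(0,\cdot)=-\int_0^T u\,\varphi_x(\cdot,0)$ as in \eqref{defcon}, but this is irrelevant for the observability inequality you actually use.
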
 
%
%

On the other hand, for large $T$, we can expect that the cost of the control remains bounded when
$\varepsilon \to 0$. This is the main question we intend to answer in this work, and the following is the
 main results in that direction.  

\begin{theorem} \label{MainT}
	Let  $L,T>0$ be such that 
	\begin{equation}\label{restriction}
		T >4.57 L/M, \, M>0;\quad T >6.19 L/|M|, \, M<0.
		\end{equation}
Then there exist $c,C>0$ such that 
\begin{equation} \label{KMax}
K( \varepsilon, T,L,M) \leq C\exp (-c \varepsilon^{-1/3})
\end{equation}
for each $\varepsilon \in (0,1)$.
	\end{theorem}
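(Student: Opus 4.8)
The plan is to follow the moment-method / complex-analysis strategy of \cite{Glass}, adapted to the fourth-order operator. The key observation (already flagged after \eqref{Bda}) is that the specific choice of $\delta$ in \eqref{del} produces an explicit family of eigenfunctions $\{\phi_k\}$ for the adjoint operator, with eigenvalues $\lambda_k = \lambda_k(\varepsilon,M)$. First I would write down these eigenpairs explicitly and compute their asymptotics as $k\to\infty$ and, crucially, as $\varepsilon\to 0^+$: after the substitution suggested by the scaling relations, the fourth-order term $\varepsilon y_{xxxx}+\delta y_{xxx}$ combines with $My_x$ so that the real parts of the eigenvalues behave like $\mathrm{Re}\,\lambda_k \sim c\,\varepsilon^{-1/3}$ plus a $k$-dependent part growing like $\varepsilon k^4$, while the leading oscillatory behaviour is governed by the transport speed $M$. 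Null controllability at the level of moments reduces to constructing a biorthogonal family $\{q_k\}\subset L^2(0,T)$ to the exponentials $\{e^{-\lambda_k t}\}$, with the control given by $u = \sum_k a_k e^{-\lambda_k T} q_k$ for suitable coefficients $a_k$ coming from the expansion of $y_0$; the cost is then controlled by $\sup_k \|q_k\|_{L^2(0,T)} \, e^{-\mathrm{Re}\,\lambda_k T}$ times a factor measuring the (possible) lack of orthogonality of $\{\phi_k\}$, i.e. a bound on the $L^2$-norms of the dual basis of eigenfunctions.

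The heart of the matter is the construction of the biorthogonal family with sharp bounds. Here I would use a Beurling–Malliavin / Paley–Wiener type construction: build an entire function $\Psi_k$ of exponential type $\le T/2$ (this is where the condition on $T$ enters) with $\Psi_k(i\lambda_j)=\delta_{kj}$ and with controlled growth on the real axis, then take an inverse Fourier–Laplace transform to obtain $q_k$. The exponential type that one can afford is dictated by the counting function of $\{\mathrm{Im}\,\lambda_k\}$ (the oscillation frequencies), and the density of these frequencies is essentially $L/|M|$ — this is exactly why the transport threshold $L/|M|$ appears, and why in the fourth-order case with a single boundary control one pays an extra constant factor (the $4.57$ and $6.19$, asymmetric in the sign of $M$ because the boundary operator $B$ and the third-order term break the $x\mapsto L-x$ symmetry differently for $M\gtrless 0$). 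The explicit numerical constants should come out of optimizing the interplay between (i) the exponential type needed to interpolate, (ii) the gap condition $|\lambda_{k+1}-\lambda_k|\ge \rho>0$ uniformly in $\varepsilon$, and (iii) the real-part lower bound $\mathrm{Re}\,\lambda_k \ge c\,\varepsilon^{-1/3}$ for all $k$ (including $k$ near where the spectrum is "smallest"), which is what ultimately yields the decay $e^{-c\varepsilon^{-1/3}}$ in \eqref{KMax} after absorbing the polynomially- or subexponentially-large biorthogonal norms.

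Concretely, the steps in order: (1) diagonalize the adjoint semigroup, writing the explicit $\phi_k$ and $\lambda_k$ and establishing their asymptotics in both $k$ and $\varepsilon$; (2) verify that $\{\phi_k\}$ forms a Riesz basis (or at least a complete minimal system with summable-type defect) in $L^2(0,L)$, with constants controlled as $\varepsilon\to 0$; (3) reduce null controllability to the moment problem $\int_0^T u(t)\, \overline{B^*\phi_k}(0)\, e^{-\lambda_k(T-t)}\,dt = -\langle y_0,\phi_k\rangle$ and check that the boundary observation $B^*\phi_k(0)$ does not vanish and is not too small (a non-degeneracy lemma for the single control); (4) construct the biorthogonal family $\{q_k\}$ with the sharp exponential-type bound using complex analysis, obtaining $\|q_k\|_{L^2(0,T)} \le C_\varepsilon e^{\omega_k}$ with $\omega_k$ subexponential in $k$ and bounded in $\varepsilon$; (5) assemble $u$, estimate its $L^2(0,T)$ norm by summing the series, and read off $K \le C e^{-c\varepsilon^{-1/3}}$.

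The main obstacle I expect is step (4) together with the bookkeeping in step (1): getting the \emph{explicit} constants $4.57$ and $6.19$ requires a careful, essentially optimal choice of the multiplier/interpolating function, tracking how the exponential type, the spectral gap, and the asymmetry between $M>0$ and $M<0$ combine — a crude Paley–Wiener argument would only give controllability for $T$ larger than some unspecified multiple of $L/|M|$, not these sharp-looking thresholds. A secondary difficulty is ensuring that all the constants $C_\varepsilon$ in the biorthogonal bounds stay \emph{bounded} (indeed are eventually beaten by $e^{-c\varepsilon^{-1/3}}$) uniformly as $\varepsilon\to 0^+$, which forces one to control the eigenfunction norms and the spectral gap uniformly in $\varepsilon$ rather than just for fixed $\varepsilon$.
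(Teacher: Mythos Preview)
Your overall plan matches the paper's approach closely: diagonalize the adjoint, build a biorthogonal family to $\{e^{-\lambda_k(T-t)}\}$ via an explicit multiplier and the Paley--Wiener theorem, then combine the resulting $L^2$ bounds with the spectral damping $e^{-\lambda_k T}$ to get the $e^{-c\varepsilon^{-1/3}}$ decay. The paper carries this out through the dual observability inequality \eqref{inobs} rather than by writing $u$ as a series, but that is cosmetic.

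There is, however, a concrete misconception in your spectral picture that would derail step (4) if you ran with it. The eigenvalues are \emph{real}:
\[
\lambda_k = \varepsilon\Bigl(\tfrac{k^2\pi^2}{L^2}+\tfrac{3M^{2/3}}{4\varepsilon^{2/3}}\Bigr)^2 - \tfrac{M^{4/3}}{4\varepsilon^{1/3}},
\]
so there are no ``oscillation frequencies $\mathrm{Im}\,\lambda_k$'' whose density would dictate the exponential type. The whole point of the special $\delta$ in \eqref{del} is to make the adjoint operator self-adjoint in a weighted $L^2$ space, killing the imaginary parts. Consequently the construction of the multiplier is not a density argument on $\mathbb{R}$: one writes an explicit canonical product $\Phi_\varepsilon(z)$ with zeros at $-i\lambda_k$ (it involves $\sin$ of a nested square root and grows like $\exp(C\varepsilon^{-1/4}|z|^{1/4})$ on the real axis, not like $\exp(C|z|)$), and then kills this growth with a Beurling--Malliavin-type multiplier $f(z)$ of exponential type $(T-\tau)/2$ whose modulus decays like $\exp(-C'\varepsilon^{-1/4}|x|^{1/4})$ on $\mathbb{R}$. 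The numerical thresholds $4.57$ and $6.19$ arise from balancing the explicit constants in these two factors against the uniform lower bound $\lambda_k \ge \tfrac{5}{16}M^{4/3}\varepsilon^{-1/3}$ (Proposition~\ref{precise}), not from a spectral-density computation.

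Finally, the source of the $M>0$ versus $M<0$ asymmetry is simpler than you suggest: it is the $L^2$-norm of the eigenfunctions $e_k(x)=\exp\bigl(-\tfrac{M^{1/3}}{2\varepsilon^{1/3}}x\bigr)\sin(k\pi x/L)$, which is $O(1)$ for $M>0$ but of order $\exp\bigl(\tfrac12|M|^{1/3}L\varepsilon^{-1/3}\bigr)$ for $M<0$ (see \eqref{normabase}). This extra exponential factor is precisely what pushes the threshold from $4.57$ to $6.19$.
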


Theorem \ref{MainT} implies 
the uniform null controllability of system (\ref{EqCon}) provided  that (\ref{restriction}) holds. In particular, we get  that
$$K( \varepsilon, T,L,M)\rightarrow 0$$
 as $\varepsilon \rightarrow 0^+$.

Moreover, we are able to give a precise lower bound for the cost $K$ for some particular small times. 
\begin{theorem} \label{MainT2}
Let $L,T>0$ be such that 
\begin{equation}\label{restlow}
		T  < 0.33  L/M, \, M>0;\quad T < 1.69 L/|M|, \, M<0.
		\end{equation}
				Then 
		 there exist $c,C>0$ such that 
\begin{equation} \label{Klow}
K( \varepsilon, T,L,M) \geq C \exp (c \eps^{-1/3})
\end{equation}
for each $\varepsilon \in (0,1)$.

	\end{theorem}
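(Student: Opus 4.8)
The idea is to combine the observability characterization of the control cost with the explicit eigenfunctions of the adjoint operator from \eqref{funprop}, and to test the observability inequality against a single, well-chosen mode. Recall first that, by the usual duality, $K(\varepsilon,T,L,M)$ equals the best constant $C_{\mathrm{obs}}$ such that
\[
\|\phi(0,\cdot)\|_{L^2(0,L)}\;\le\;C_{\mathrm{obs}}\,\|\mathcal{O}\phi\|_{L^2(0,T)}
\]
holds for every solution $\phi$ of the adjoint system of \eqref{EqCon} (run backward in time, with arbitrary final datum), where $\mathcal{O}\phi$ denotes the boundary trace at $x=0$ dual to the control operator $B$, obtained through integration by parts. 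In particular, plugging a single mode into this inequality already yields a lower bound: if $(\Lambda_k,\psi_k)$ is an eigenpair of the adjoint spatial operator, then $\phi(t,x)=e^{(t-T)\Lambda_k}\psi_k(x)$ is admissible, and computing the two norms gives, for the relevant modes (for which $\operatorname{Re}\Lambda_k>0$),
\[
K^2\;\ge\;2\,\operatorname{Re}\Lambda_k\;\frac{e^{-2T\operatorname{Re}\Lambda_k}}{1-e^{-2T\operatorname{Re}\Lambda_k}}\;\frac{\|\psi_k\|_{L^2(0,L)}^2}{|\mathcal{O}\psi_k|^2}.
\]
Taking logarithms, the right-hand side is bounded below by $\exp\!\big(-2T\operatorname{Re}\Lambda_k+\log(\|\psi_k\|^2/|\mathcal{O}\psi_k|^2)\big)$ up to a polynomial factor, so the whole matter reduces to the asymptotics, as $\varepsilon\to0^+$, of $\operatorname{Re}\Lambda_k$ and of the bulk-to-boundary ratio $\|\psi_k\|^2/|\mathcal{O}\psi_k|^2$, for a suitably chosen mode index.

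Here the special value of $\delta$ in \eqref{del} is what makes the computation tractable. After the rescaling $x\mapsto\varepsilon^{-1/3}M^{1/3}x$ the characteristic equation of the adjoint operator takes the form $\Lambda=\varepsilon^{-1/3}M^{4/3}P(\rho)$ for a fixed quartic $P$ with explicit roots, and the relevant window is the one in which diffusion balances transport, namely $k\sim(M/\varepsilon)^{1/3}$. In that window a WKB/boundary-layer description applies: $\psi_k$ is a combination of the four exponentials $e^{r_jx}$, two of the $r_j$ being of size $\varepsilon^{-1/3}$ (the boundary-layer scales) and two of size $O(1)$; in the bulk $\psi_k$ reduces to the transport profile, while near one endpoint it carries a thin layer, the location of that layer depending on $\operatorname{sgn}M$ and interacting with the three homogeneous boundary conditions satisfied by $\psi_k$. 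One then checks that both $\operatorname{Re}\Lambda_k$ and $\log(\|\psi_k\|^2/|\mathcal{O}\psi_k|^2)$ equal $\varepsilon^{-1/3}$ times quantities that converge, as $\varepsilon\to0$, to explicit functions of $L$, $M$ and a continuous rescaled-frequency parameter $s$, so that the exponent above has the form $\varepsilon^{-1/3}M^{1/3}\big(L\,a(s)-2TM\,b(s)\big)$ for positive profiles $a,b$ (with $M$ replaced by $|M|$, and different profiles, when $M<0$). Its supremum over $s$ is strictly positive precisely when $T<0.33\,L/M$ for $M>0$, respectively $T<1.69\,L/|M|$ for $M<0$; choosing $k$ near the optimizing value of $s$ then yields \eqref{Klow}. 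If convenient, the scaling relations stated after \eqref{costo} may first be used to normalize one of the parameters.

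The main obstacle is the middle step: establishing uniform asymptotics for the eigenvalues $\Lambda_k$ and eigenfunctions $\psi_k$ in the window $k\sim(M/\varepsilon)^{1/3}$. This requires a careful analysis of the four branches of $r\mapsto\varepsilon r^4-\delta r^3-Mr$ in the singular limit (separating the two boundary-layer roots of size $\varepsilon^{-1/3}$ from the two regular ones), of the dispersion relation that the adjoint boundary conditions impose on $\Lambda_k$, and finally of the scalar optimization in $s$ that produces the numerical thresholds in \eqref{restlow}; the asymmetry between the cases $M>0$ and $M<0$ enters through the side on which the boundary layer sits. A variant following the complex-analytic method of \cite{Glass} could replace the single eigenfunction by a wave packet $\sum_k a_k\phi_k$ in order to sharpen the constants, but the single-mode estimate already produces the claimed exponential blow-up.
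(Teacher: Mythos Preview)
Your plan has a genuine gap: the single-mode observability test cannot produce exponential blow-up when $M>0$, so it does not prove \eqref{Klow} in that case.

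Indeed, with the explicit eigenfunctions $e_k$ of \eqref{funprop} (which render the WKB/boundary-layer analysis you sketch entirely unnecessary), the adjoint solution $\varphi(t,x)=e^{-\lambda_k(T-t)}e_k(x)$ satisfies $\varphi_x(t,0)=\tfrac{k\pi}{L}e^{-\lambda_k(T-t)}$, and hence your inequality reads
\[
K^2\;\ge\;\frac{2\lambda_k L^2}{k^2\pi^2}\,\frac{e^{-2\lambda_k T}}{1-e^{-2\lambda_k T}}\,\|e_k\|_{L^2}^2.
\]
For $M>0$ one has $\|e_k\|_{L^2}^2=\int_0^L e^{-M^{1/3}\varepsilon^{-1/3}x}\sin^2(\tfrac{k\pi x}{L})\,dx\le \varepsilon^{1/3}/M^{1/3}$, so $\log(\|e_k\|^2/|e_k'(0)|^2)$ is at best $O(\log\varepsilon)$, not $\varepsilon^{-1/3}$ times a positive profile as you assert. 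Since $\lambda_k\ge \tfrac{5}{16}M^{4/3}\varepsilon^{-1/3}$ for every $k$, the exponent $-2T\lambda_k+\log(\|e_k\|^2/|e_k'(0)|^2)$ tends to $-\infty$ for all choices of $k$, and the bound gives only $K\to0$. The asymmetry you hint at (``the side on which the boundary layer sits'') is precisely the point: for $M>0$ the weight in $e_k$ is exponentially concentrated near the observation point $x=0$, and no single mode is poorly observed.

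The paper therefore does \emph{not} use a single-mode test. It fixes the initial datum $y_0$ dual to $e_1$, considers the entire function $v(s)=\int_{-T/2}^{T/2}u(t+T/2)e^{-ist}\,dt$ built from the optimal control, and exploits that $v$ has exponential type $T/2$, is bounded on $\mathbb{R}$ by $K\|y_0\|$ times a constant, satisfies $v(i\lambda_1)\neq0$, and \emph{vanishes at all} $i\lambda_k$, $k\ge2$. Feeding this into the Poisson--Jensen representation for functions of exponential type in the upper half-plane, the Blaschke sum $\sum_{k\ge2}\log\big|\tfrac{b_1-b_k}{b_1-\overline{b_k}}\big|$ contributes a negative term of size $-c\,\varepsilon^{-1/3}$ (computed via an explicit integral and residues), and comparing with $|v(i\lambda_1)|$ yields the bound. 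The numerical thresholds $0.33$ and $1.69$ come from this Blaschke contribution, not from any single-mode ratio. Your closing remark that the complex-analytic ``variant \ldots\ could sharpen the constants'' understates the situation: for $M>0$ it is not a refinement but the only route, and for $M<0$ your single-mode argument with $k=1$ yields only the weaker threshold $T<\tfrac{8}{5}L/|M|=1.6\,L/|M|$.
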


The rest of the article is organized as follows. In Section \ref{WP}, we prove the well-posedness of the proposed system by establishing a transposition scheme. Also, in that section we state the characterization of the controllability results, and we prove 
Proposition
\ref{MainT3}. In Section \ref{multi} we prove that the differential operator of  the adjoint equation is diagonalizable, and we introduce an entire function with simple zeros at those eigenvalues.
In Section \ref{OBS} it is proved the observability inequality 
which corresponds 
 to the uniform controllability result stated in Theorem \ref{MainT}, the main result of this work. 
 Finally, in Section \ref{lowbounds}  we prove Theorem \ref{MainT2}, by showing 
  the existence of lower bounds for the null control.

\section{Well-Posedness and characterizations} \label{WP}

\subsection{Well posedness}
In order to define the solution by transposition of the control system \eqref{EqCon}, we consider the adjoint equation given by
\begin{equation}
\label{EqAdj}
\left\{\begin{array}{rl} 
- \varphi_{t}+ \varepsilon \varphi_{xxxx}  -  \delta \varphi_{xxx} - M\varphi_x  =  g, & (t,x)\in (0,T)\times(0,L), \\
\varphi(t,0)= 0,\hspace{0.1cm} \varphi(t,L)=0, & t\in(0,T), \\
B^*\varphi(t,0)=  0, \hspace{0.1cm} B^*\varphi(t,L)=0, & t\in(0,T), \\
\varphi(T,x)= \varphi_0(x) , & x\in(0,L),
\end{array}\right.
\end{equation}
%
where $B^*\varphi =   \varepsilon \varphi_{xx} - (\delta/2) \varphi_x$.
After a change of variable in $t$, we get the equivalent system 
\begin{equation}
\label{EqAdj2}
\left\{\begin{array}{rl} 
 \varphi_{t}+ \varepsilon \varphi_{xxxx}  -  \delta \varphi_{xxx} - M\varphi_x  =  g, & (t,x)\in (0,T)\times(0,L), \\
\varphi(t,0)= 0,\hspace{0.1cm} \varphi(t,L)=0, & t\in(0,T), \\
B^*\varphi(t,0)=  0, \hspace{0.1cm} B^*\varphi(t,L)=0, & t\in(0,T), \\
\varphi(0,x)= \varphi_0(x) , & x\in(0,L).
\end{array}\right.
\end{equation}

We have the following well-posedness framework for system \eqref{EqAdj2} (and then of system \eqref{EqAdj} as well).
%
\begin{proposition} \label{WPad}
For each $g \in L^2(0,T; L^2(0,L))$ and $\varphi_0 \in L^2(0,L)$, the system \eqref{EqAdj2}
has   a unique solution 
$$\varphi \in C([0,T]; L^2(0,L)) \cap L^2(0,T; H^2\cap H_0^1(0,L)).$$
Moreover, there exists a constant $C>0$ independent of $\varepsilon$ such that 
\begin{equation} \label{desWP} 
\begin{aligned}
\| \varphi \|_{L^ \infty(0,T; L^2(0,L)) }  &  \leq C ( \|g\|_{L^2(L^2)} + \| \varphi_0 \|_{L^2(0,L)}), \\
\sqrt \varepsilon \,  \| \varphi \|_{L^2(0,T; H^2(0,L))}  & \leq C (\|g\|_{L^2(L^2)} + \| \varphi_0 \|_{L^2(0,L)})
\end{aligned}
\end{equation}
for each $g \in L^2(0,T; L^2(0,L))$ and $\varphi_0 \in L^2(0,L)$.
\end{proposition}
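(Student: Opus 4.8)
The plan is to obtain the two estimates in \eqref{desWP} by the energy method, working first on smooth data and then passing to the limit by density; existence and uniqueness follow from a Galerkin scheme (or, equivalently, semigroup theory) once the a priori bounds are in hand. First I would record the key structural fact that makes the problem self-adjoint-friendly: the operator $A\varphi := \varepsilon\varphi_{xxxx} - \delta\varphi_{xxx}$ together with the boundary conditions $\varphi(0)=\varphi(L)=0$ and $B^*\varphi(0)=B^*\varphi(L)=0$ is, after an integration by parts, such that $\int_0^L (A\varphi)\varphi\,dx = \varepsilon\int_0^L \varphi_{xx}^2\,dx + (\text{boundary terms that vanish})$. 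Indeed, integrating $\varepsilon\varphi_{xxxx}\varphi$ by parts twice produces $\varepsilon\int \varphi_{xx}^2 - [\varepsilon\varphi_{xxx}\varphi] + [\varepsilon\varphi_{xx}\varphi_x]$, and integrating $-\delta\varphi_{xxx}\varphi$ by parts produces $\tfrac{\delta}{2}\int (\varphi_x^2)_x$-type terms; the boundary conditions $\varphi=0$ kill the $\varphi_{xxx}\varphi$ term, and the combination $B^*\varphi = \varepsilon\varphi_{xx} - (\delta/2)\varphi_x$ vanishing at the endpoints is exactly what is needed to cancel the remaining boundary contribution $[\varepsilon\varphi_{xx}\varphi_x - (\delta/2)\varphi_x^2]_0^L = [(B^*\varphi)\varphi_x]_0^L = 0$. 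This is the crucial point: the choice of $\delta$ and of $B^*$ is what renders $\langle A\varphi,\varphi\rangle = \varepsilon\|\varphi_{xx}\|_{L^2}^2$ with no leftover boundary terms.

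Granting that, I would multiply the equation $\varphi_t + A\varphi - M\varphi_x = g$ by $\varphi$ and integrate over $(0,L)$. The term $\int A\varphi\,\varphi$ equals $\varepsilon\|\varphi_{xx}\|_{L^2}^2 \ge 0$; the transport term $-M\int\varphi_x\varphi\,dx = -\tfrac{M}{2}[\varphi^2]_0^L = 0$ since $\varphi$ vanishes at both endpoints; and $\int g\varphi \le \|g\|_{L^2}\|\varphi\|_{L^2}$. This yields
\begin{equation*}
\frac12\frac{d}{dt}\|\varphi(t)\|_{L^2}^2 + \varepsilon\|\varphi_{xx}(t)\|_{L^2}^2 \le \|g(t)\|_{L^2}\|\varphi(t)\|_{L^2} \le \tfrac12\|g(t)\|_{L^2}^2 + \tfrac12\|\varphi(t)\|_{L^2}^2.
\end{equation*}
Gr\"onwall's inequality on $[0,T]$ then gives $\|\varphi\|_{L^\infty(0,T;L^2)}^2 \le e^T(\|\varphi_0\|_{L^2}^2 + \|g\|_{L^2(L^2)}^2)$, which is the first line of \eqref{desWP} with $C = C(T)$ independent of $\varepsilon$. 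Integrating the differential inequality in $t$ (dropping the already-controlled $\|\varphi\|_{L^2}$ term back onto the right using the bound just obtained) gives $\varepsilon\int_0^T\|\varphi_{xx}\|_{L^2}^2\,dt \le C(\|\varphi_0\|_{L^2}^2 + \|g\|_{L^2(L^2)}^2)$; since $\varphi\in H_0^1$, the Poincar\'e inequality controls $\|\varphi\|_{L^2}$ and $\|\varphi_x\|_{L^2}$ by $\|\varphi_{xx}\|_{L^2}$, so this upgrades to the full $H^2$ bound $\sqrt\varepsilon\,\|\varphi\|_{L^2(0,T;H^2)}\le C(\|g\|_{L^2(L^2)}+\|\varphi_0\|_{L^2})$, the second line of \eqref{desWP}.

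For the existence and uniqueness statement I would run a standard Faedo--Galerkin approximation using the eigenbasis of the (self-adjoint, positive) operator $A$ with the above boundary conditions — whose diagonalizability is in any case established later in Section \ref{multi}. The a priori estimates above are uniform in the Galerkin dimension, so one extracts a weak limit $\varphi\in L^\infty(0,T;L^2)\cap L^2(0,T;H^2\cap H_0^1)$ solving \eqref{EqAdj2}; from the equation, $\varphi_t\in L^2(0,T;H^{-2})$, which together with $\varphi\in L^2(0,T;H^2\cap H_0^1)$ yields $\varphi\in C([0,T];L^2)$ by the Lions--Magenes interpolation lemma, and also justifies the integrations by parts a posteriori. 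Uniqueness is immediate from the energy estimate applied to the difference of two solutions with $g=0$, $\varphi_0=0$. The main obstacle, and the only genuinely non-routine step, is the boundary-term bookkeeping in establishing $\langle A\varphi,\varphi\rangle = \varepsilon\|\varphi_{xx}\|_{L^2}^2$: one must verify carefully that the third-order term $-\delta\varphi_{xxx}$ contributes no interior sign-indefinite term (it does not, because after integration by parts it becomes $\tfrac{\delta}{2}(\varphi_x^2)_x$, a pure boundary term) and that the chosen $B^*$ annihilates precisely the surviving boundary expression $(B^*\varphi)\varphi_x\big|_0^L$. Everything downstream is classical parabolic theory, and the $\varepsilon$-independence of $C$ is manifest because $\varepsilon$ multiplies only the (favorably signed) coercive term.
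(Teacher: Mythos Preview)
Your proposal is correct and follows essentially the same approach as the paper: multiply the equation by $\varphi$, use the boundary conditions to obtain the energy identity \eqref{ener}, apply Gr\"onwall for the $L^\infty(L^2)$ bound, and integrate in time for the $L^2(H^2)$ bound. You are more explicit than the paper about why the boundary terms cancel (the paper simply says ``taking into account the boundary conditions'') and about the existence/uniqueness mechanism (the paper just invokes a ``standard density argument''), but the underlying argument is the same.
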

\begin{proof}
Assuming enough regularity, we multiply the first  equation of system  \eqref{EqAdj2} by $\varphi$ and we integrate in space. 
Taking into account the boundary conditions satisfied by $\varphi$, we obtain 
\begin{equation} \label{ener}
\frac 12 \frac{d}{dt} \int_0^L |\varphi(t, x)|^2 dx + \varepsilon \int_0^L  |\varphi_{xx}(t,x)|^2 dx =  \int_0^L g(t,x) \varphi(t,x)dx
\end{equation}
for each $t \in (0,T)$.  Hence, Gronwall inequality implies that
\begin{equation} \label{ener2}
\int_0^L |\varphi(t, x)|^2 dx 
\leq 
C\left(  \int_0^L  |\varphi_0(x)|^2 dx + \int_0^T \int_0^L | g(t,x) |^2 dxdt \right)
\end{equation}
for each $t \in (0,T)$.  Integrating \eqref{ener} in $(0,T)$ we deduce that 
\begin{equation} \label{ener3}
\varepsilon \int_0^T\int_0^L |\varphi_{xx}(t, x)|^2 dx dt
\leq 
C\left(  \int_0^L  |\varphi_0(x)|^2 dx + \int_0^T \int_0^L | g(t,x) |^2 dxdt \right).
\end{equation}

From \eqref{ener2} and \eqref{ener3} we obtain inequalities \eqref{desWP} for regular solutions. Using an standard density argument, we get the desired result.  
 \end{proof}

Using a trace regularity result, we obtain the following estimate for the trace corresponding to the observation of the control system.

\begin{corollary} \label{cotrace}
There exists a constant $C>0$ independent of $\varepsilon$ such that 
$$\sqrt \varepsilon \, \| \varphi_x(\cdot,0)  \|_{L^2(0,T)}   \leq C (\|g\|_{L^2(L^2)} + \| \varphi_0 \|_{L^2(0,L)})$$
for each $g \in L^2(0,T; L^2(0,L))$ and $\varphi_0 \in L^2(0,L)$, where 
$\varphi$ is the solution of  system \eqref{EqAdj2}.
\end{corollary}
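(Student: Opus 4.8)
The plan is to deduce this trace estimate directly from the interior bound \eqref{desWP} of Proposition~\ref{WPad}, the underlying ``trace regularity result'' being the elementary one--dimensional fact that a boundary value of $\varphi_x$ is controlled by the $L^2$--norm of $\varphi_{xx}$ once $\varphi$ vanishes at both endpoints.

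Concretely, I would fix $t$ in the set of full measure for which $\varphi(t,\cdot)\in H^2\cap H_0^1(0,L)$. Since $\varphi(t,0)=\varphi(t,L)=0$ we have $\int_0^L\varphi_x(t,x)\,dx=0$, hence
\begin{equation*}
L\,\varphi_x(t,0)=\int_0^L\big(\varphi_x(t,0)-\varphi_x(t,x)\big)\,dx=-\int_0^L\int_0^x\varphi_{xx}(t,s)\,ds\,dx ,
\end{equation*}
so that $|\varphi_x(t,0)|\le\|\varphi_{xx}(t,\cdot)\|_{L^1(0,L)}\le\sqrt{L}\,\|\varphi_{xx}(t,\cdot)\|_{L^2(0,L)}$. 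The map $t\mapsto\varphi_x(t,0)$ is measurable, being the composition of the measurable $H^2(0,L)$--valued map $t\mapsto\varphi(t,\cdot)$ with the bounded evaluation functional $\psi\mapsto\psi_x(0)$. Squaring the pointwise bound, integrating over $(0,T)$, taking square roots and multiplying by $\sqrt\varepsilon$, I obtain
\begin{equation*}
\sqrt\varepsilon\,\|\varphi_x(\cdot,0)\|_{L^2(0,T)}\le\sqrt{L}\,\sqrt\varepsilon\,\|\varphi_{xx}\|_{L^2(0,T;L^2(0,L))}\le\sqrt{L}\,\sqrt\varepsilon\,\|\varphi\|_{L^2(0,T;H^2(0,L))}\le C\big(\|g\|_{L^2(L^2)}+\|\varphi_0\|_{L^2(0,L)}\big),
\end{equation*}
where the last inequality is the second estimate in \eqref{desWP}. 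As in the proof of Proposition~\ref{WPad}, one performs these manipulations first for smooth data and solutions, for which $\varphi_x(\cdot,0)$ is a classical trace, and concludes by density.

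There is no real obstacle here: the only point deserving attention is that the constant $\sqrt{L}$ produced above is independent of $\varepsilon$, so the whole $\varepsilon$--degeneracy of the trace is carried by the explicit factor $\sqrt\varepsilon$ and is precisely compensated by the weighted interior estimate of Proposition~\ref{WPad}. One could alternatively run a direct multiplier computation, testing the first equation of \eqref{EqAdj2} against $m(x)\varphi_x$ for a suitable smooth weight $m$ and using the boundary conditions $\varphi=0$ and $\varepsilon\varphi_{xx}=\tfrac{\delta}{2}\varphi_x$ at $x=0,L$ to handle the boundary terms, but this is longer and gives nothing sharper, so I would keep the embedding argument.
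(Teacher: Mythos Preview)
Your argument is correct and is precisely the content the paper only gestures at: the paper gives no proof beyond the phrase ``using a trace regularity result,'' and your elementary one--dimensional trace inequality for functions in $H^2\cap H_0^1(0,L)$, combined with the second estimate in \eqref{desWP}, is exactly such a result. There is nothing to compare; you have simply filled in the omitted details.
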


Next, we state the definition of solutions of the control system by means of a transposition scheme. 

\begin{definition} \label{deftran} 
Given $y_0 \in L^2(0,L)$ and $u \in L^2(0,T)$, we say that 
$y = y(t,x)$ is a solution of
system \eqref{EqCon} if, for every $t \in [0,T]$, the function $y(t,\cdot) \in L^2(0,L)$ satisfies
\begin{equation} \label{transp0}
\int_0^L y(t,x) \varphi_0(x) dx 
= 
\int_0^Ly_0(x) \varphi(0,x) dx 
+ 
 \int_0^t u(s) \varphi_{x}(s,0) ds
\end{equation}
for every  $\varphi_0  \in L^2(0,L)$, where $\varphi$ is  the solution of system  \eqref{EqAdj} posed in $(0,t) \times (0,L)$ with 
$\varphi(t,\cdot) = \varphi_0$ and  $g=0$.

\end{definition}

\begin{proposition} \label{existun}
For each $y_0 \in L^2(0,L)$ and $u \in L^2(0,T)$, there exists a unique solution  
$y \in C([0,T]; L^2(0,L))$ 
of
system \eqref{EqCon}.  
Moreover, there exists  $C >0$ independent of $\varepsilon$ such that
\begin{equation} \label{maxy}
\max_{t \in [0,T]} \| y(t, \cdot) \|_{L^2(0,L)} \leq C \left(  \|y_0\|_{L^2(0,L)} + \frac{1}{\sqrt  \varepsilon}  \|u \|_{L^2(0,T)} \right).
\end{equation}
\end{proposition}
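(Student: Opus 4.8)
The plan is to prove existence and uniqueness of the transposition solution by a duality argument, using the well-posedness and trace estimates for the adjoint system established in Proposition \ref{WPad} and Corollary \ref{cotrace}. Fix $t\in[0,T]$. The right-hand side of \eqref{transp0},
\[
\Lambda_t(\varphi_0):=\int_0^Ly_0(x)\varphi(0,x)\,dx+\int_0^t u(s)\varphi_x(s,0)\,ds,
\]
defines a linear functional of $\varphi_0\in L^2(0,L)$, where $\varphi$ is the solution of \eqref{EqAdj} on $(0,t)\times(0,L)$ with final datum $\varphi_0$ and $g=0$. First I would check that $\Lambda_t$ is bounded: by Cauchy--Schwarz the first term is bounded by $\|y_0\|_{L^2}\|\varphi(0,\cdot)\|_{L^2}$, and by the first estimate in \eqref{desWP} (with $g=0$) this is $\leq C\|y_0\|_{L^2}\|\varphi_0\|_{L^2}$. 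For the second term, Cauchy--Schwarz in $s$ gives $\|u\|_{L^2(0,T)}\|\varphi_x(\cdot,0)\|_{L^2(0,t)}$, and Corollary \ref{cotrace} bounds this by $C\varepsilon^{-1/2}\|u\|_{L^2(0,T)}\|\varphi_0\|_{L^2}$. Hence $\Lambda_t$ is continuous on $L^2(0,L)$ with norm $\leq C(\|y_0\|_{L^2}+\varepsilon^{-1/2}\|u\|_{L^2(0,T)})$, so by the Riesz representation theorem there is a unique $y(t,\cdot)\in L^2(0,L)$ satisfying \eqref{transp0} for all $\varphi_0$, and $\|y(t,\cdot)\|_{L^2}$ is bounded by the same quantity. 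Taking the supremum over $t\in[0,T]$ gives \eqref{maxy}; uniqueness for each $t$ is immediate since \eqref{transp0} determines all inner products of $y(t,\cdot)$ against a dense (in fact total) set.

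It remains to prove the time-continuity $y\in C([0,T];L^2(0,L))$. For this I would show that $t\mapsto\Lambda_t(\varphi_0)$ is continuous, uniformly for $\varphi_0$ in the unit ball of $L^2(0,L)$; since $y(t,\cdot)$ is the Riesz representative of $\Lambda_t$, equicontinuity of the family $\{\Lambda_t\}$ together with density yields strong continuity of $t\mapsto y(t,\cdot)$. The term $\int_0^t u(s)\varphi_x(s,0)\,ds$ is handled by absolute continuity of the integral and the trace bound. The first term requires a little more care because the solution $\varphi$ of the adjoint problem depends on the time horizon $t$ through which it is run; here one uses the semigroup structure, writing $\varphi(0,\cdot)=S(t)^*\varphi_0$ where $S(t)^*$ is the (strongly continuous) solution operator of the backward adjoint equation, so that $\int_0^L y_0\,\varphi(0,\cdot)\,dx=\langle S(t)^*\varphi_0,y_0\rangle$ depends continuously on $t$ uniformly on bounded sets of $\varphi_0$. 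Alternatively, one proves the estimate \eqref{maxy} and continuity first for smooth data $(y_0,u)$ — where $y$ can be obtained as a genuine solution by standard parabolic theory and the transposition identity verified by integration by parts — and then passes to the limit by density, using \eqref{maxy} to control the approximation in $C([0,T];L^2(0,L))$.

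The main obstacle is the dependence of the adjoint solution on the running time $t$ in the definition \eqref{transp0}: one must be careful that the map $t\mapsto\varphi(0,\cdot)$ (for fixed $\varphi_0$) is continuous and, more importantly, that the relevant estimates are uniform in $\varphi_0$ over bounded sets, so that the equicontinuity argument for $t\mapsto y(t,\cdot)$ goes through. Once the semigroup/density viewpoint is adopted this is routine, but it is the point where the proof has genuine content beyond bookkeeping; the constants' independence of $\varepsilon$ is inherited directly from Proposition \ref{WPad} and Corollary \ref{cotrace}, with the only $\varepsilon$-dependence being the explicit $\varepsilon^{-1/2}$ factor multiplying $\|u\|_{L^2(0,T)}$.
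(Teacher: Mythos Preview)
Your proposal is correct and follows essentially the same route as the paper: define the linear functional $\Lambda_t$ on $L^2(0,L)$ via the adjoint solution, bound it using Proposition~\ref{WPad} and Corollary~\ref{cotrace}, and apply Riesz to obtain $y(t,\cdot)$ together with the estimate~\eqref{maxy}. In fact you are more thorough than the paper, which does not spell out the argument for time-continuity $y\in C([0,T];L^2(0,L))$; your discussion of equicontinuity via the semigroup structure (or, alternatively, density of smooth data) is the standard way to close that gap.
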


\begin{proof}
Let us take  $y_0 \in L^2(0,L)$ and $u \in L^2(0,T)$.
For each $t \in [0,T]$,  we consider  system \eqref{EqAdj}  in $(0,L) \times (0,t)$, with    $\varphi(t, \cdot) = \varphi_0 \in L^2(0,L)$ and
 $g = 0$.
 By Proposition \ref{WPad}, there exists a unique  solution $\varphi  \in C([0,t]; L^2(0,L)) \cap L^2(0,t; H^2 \cap H_0^1(0,L))$.
Hence, the  functional defined by
$$
\varphi_0  \in L^2(0,L) \mapsto   \int_0^L y_0(x) \varphi(0, x) dx  +  
\int_0^t u(s) \varphi_{x}(s, 0)ds 
$$ 
is linear and continuous.
%
The Riesz representation theorem implies that  there exists a unique  $y(t, \cdot)  \in L^2(0,L)$  satisfying \eqref{transp0}.

Moreover, from    \eqref{desWP}  and Corollary \ref{cotrace}, we  have that 
\begin{equation}
\begin{aligned}
\left|\int_0^L y(t,x) \varphi_0(x) dx \right|
& \leq  
\| y_0 \|_{L^2(0,L)} \| \varphi(0,\cdot) \|_{L^2(0,L)}  
+ 
\|u\|_{L^2(0,t)} \|  \varphi_{x}(\cdot,0) \|_{L^2(0,t)}  \\
& \leq C \left(  \|y_0\|_{L^2(0,L)} + \frac{1}{\sqrt \varepsilon} \|u \|_{L^2(0,T)} \right) \|\varphi_0\|_{L^2(0,L)}
\end{aligned}
\end{equation}
for each $\varphi_0 \in L^2(0,L)$, and hence we obtain the desired result.
\end{proof}
\begin{remark}
In particular, if $y$ is the solution of system \eqref{EqCon} with control $u$ and initial condition $y_0$, 
from Duhamel principle we get that 
\begin{equation} \label{transp}
\begin{split}
\int_0^T\int_0^L y(t,x) g(t,x) dx dt
+ \int_0^L  y(T,x) \varphi_0(x) dx
= \\
\int_0^Ly_0(x) \varphi(0,x) dx 
+ 
\int_0^T u(t) \varphi_{x}(t,0) dt 
\end{split}
\end{equation}
for all $\varphi_0 \in L^2(0,L)$, $g \in L^2(0,T;L^2(0,L))$, where $\varphi$ is  the solution of system  \eqref{EqAdj}.
\end{remark}
\begin{remark} \label{powere}
The parameter  $\varepsilon $ does not appear explicitly
in \eqref{transp0} 
 because we have 
set $By(\cdot,0)  = u $ in system \eqref{EqCon}. 
However, 
we recall that 
  the trace operator $By$ depends on  $\varepsilon$, and 
other  choices for the  control are posible.
For instance we can  define the control $\tilde u \in L^2(0,T)$  
acting in  system \eqref{EqCon} by the boundary condition 
$$\frac{1}{\sqrt \varepsilon} By(\cdot, 0) = \tilde u. $$

In that case, the corresponding solution  would  satisfies,  instead of \eqref{maxy}, that 
\begin{equation} \label{maxy2}
\max_{t \in [0,T]} \| y(t, \cdot) \|_{L^2(0,L)} \leq C \left(  \|y_0\|_{L^2(0,L)} +   \|\tilde u \|_{L^2(0,T)} \right),
\end{equation}
and then the coefficient $\frac {1}{\sqrt \varepsilon}$ on   Proposition \ref{MainT3}
 would not appear.
However, we have chosen the boundary condition in this way 
for the sake of simplicity of the duality condition. 
Also, we recall that the exponential cost  appearing in  \eqref{KMax} implies 
that taking this choice (or another  power of $\varepsilon$)   would not affect the result stated in Theorem \ref{MainT}.

\end{remark}

%
\subsection{Controllability}
Given the previous framework of well-posedness, we establish the notion of null controllability. 
\begin{definition}  \label{defcont}

Given $y_0 \in L^2(0,L)$, we say that  $u \in L^2(0,T)$ is a null-control of $y_0$ in system \eqref{EqCon} if 
\begin{equation} \label{defcon}
 \int_0^L   y_0(x) \varphi(0, x ) dx 
=  - 
 \int_0^T u(t) \varphi_{x}(t,0) dt 
\end{equation}
for each $\varphi_0 \in L^2(0,L)$, where $\varphi$ is  the solution of system  \eqref{EqAdj} with  $g  = 0$.
\end{definition}

With the previous definition in mind, the following characterization of null controllability is a direct consequence of a classical result of functional analysis. 

%
\begin{proposition} \label{ContObs}

System \eqref{EqCon} is null controllable in $L^2(0,L)$ if and only if there exists $C>0$ such that
\begin{equation}  \label{inobs}
\int_0^L |\varphi(0,x)|^2 dx \leq C  \int_0^T |\varphi_{x}(t,0)|^2 dt
\end{equation}
for each $\varphi_0 \in L^2(0,L)$, where $\varphi$ is the solution of system \eqref{EqAdj} with  $g  = 0$.
Furthermore, we have the relation
\begin{equation} \label{Kdual}
K( \varepsilon, T,L,M)  =   \inf \{ \sqrt C \, : \, \eqref{inobs} \text{ is satisfied for each } \varphi_0 \in L^2(0,L)  \}. 
\end{equation}

\end{proposition}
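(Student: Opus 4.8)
This is the classical controllability--observability duality, so I would deduce it from the transposition identity \eqref{transp} together with a standard functional-analytic lemma. First, I would encode the adjoint problem in two bounded operators: let $\mathcal{L}\colon L^{2}(0,L)\to L^{2}(0,L)$ be given by $\mathcal{L}\varphi_{0}:=\varphi(0,\cdot)$ and $\mathcal{O}\colon L^{2}(0,L)\to L^{2}(0,T)$ by $\mathcal{O}\varphi_{0}:=\varphi_{x}(\cdot,0)$, where $\varphi$ solves \eqref{EqAdj} with $g=0$ and $\varphi(T,\cdot)=\varphi_{0}$; these are well defined and bounded by Proposition \ref{WPad} and Corollary \ref{cotrace}. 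Taking $g=0$ in \eqref{transp}, one identifies the adjoints as $\mathcal{L}^{*}y_{0}=y(T,\cdot)$, the value at time $T$ of the solution of \eqref{EqCon} with control $u=0$ and datum $y_{0}$, and $\mathcal{O}^{*}u=y(T,\cdot)$, the value at time $T$ of the solution with $y_{0}=0$ and control $u$.

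With this notation, Definition \ref{defcont} states precisely that $u$ is a null-control of $y_{0}$ if and only if $\langle y_{0},\mathcal{L}\varphi_{0}\rangle_{L^{2}(0,L)}=-\langle u,\mathcal{O}\varphi_{0}\rangle_{L^{2}(0,T)}$ for all $\varphi_{0}\in L^{2}(0,L)$, i.e.\ $\mathcal{L}^{*}y_{0}=-\mathcal{O}^{*}u$. Hence system \eqref{EqCon} is null controllable in $L^{2}(0,L)$ if and only if $\operatorname{Range}(\mathcal{L}^{*})\subseteq\operatorname{Range}(\mathcal{O}^{*})$. I would then invoke the classical range-inclusion lemma of Douglas: for bounded operators $A,B$ between Hilbert spaces with common codomain, $\operatorname{Range}(A)\subseteq\operatorname{Range}(B)$ holds if and only if there is $C>0$ with $\|A^{*}w\|\le C\|B^{*}w\|$ for every $w$. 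Taking $A=\mathcal{L}^{*}$ and $B=\mathcal{O}^{*}$, so that $A^{*}=\mathcal{L}$ and $B^{*}=\mathcal{O}$, the inclusion becomes the existence of $C>0$ with $\|\mathcal{L}\varphi_{0}\|_{L^{2}(0,L)}^{2}\le C\|\mathcal{O}\varphi_{0}\|_{L^{2}(0,T)}^{2}$ for all $\varphi_{0}$, which is exactly \eqref{inobs}.

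For the cost identity \eqref{Kdual} I would run the Hilbert uniqueness method while tracking constants. Let $C_{*}$ be the least admissible constant in \eqref{inobs}, so that the infimum in \eqref{Kdual} equals $\sqrt{C_{*}}$. Fixing $y_{0}$, under \eqref{inobs} the functional $J(\varphi_{0}):=\tfrac{1}{2}\|\mathcal{O}\varphi_{0}\|_{L^{2}(0,T)}^{2}+\langle y_{0},\mathcal{L}\varphi_{0}\rangle_{L^{2}(0,L)}$ is continuous and coercive with respect to the observation seminorm $\varphi_{0}\mapsto\|\mathcal{O}\varphi_{0}\|_{L^{2}(0,T)}$, hence attains its minimum at some $\hat\varphi_{0}$ in the corresponding completion of $L^{2}(0,L)$. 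Its Euler--Lagrange equation shows that $u:=\mathcal{O}\hat\varphi_{0}$ is a null-control of $y_{0}$, that it has least $L^{2}(0,T)$-norm among all null-controls of $y_{0}$ (any other one $\tilde u$ satisfies $\langle\tilde u-u,\mathcal{O}\varphi_{0}\rangle=0$ for all $\varphi_{0}$, in particular $\langle\tilde u-u,u\rangle=0$), and, testing that equation against $\hat\varphi_{0}$ and using \eqref{inobs}, that $\|u\|_{L^{2}(0,T)}\le\sqrt{C_{*}}\,\|y_{0}\|_{L^{2}(0,L)}$; this gives $K\le\sqrt{C_{*}}$. Conversely, given $\varphi_{0}$ with $\mathcal{L}\varphi_{0}\neq0$, choosing $y_{0}=\mathcal{L}\varphi_{0}/\|\mathcal{L}\varphi_{0}\|$ and any null-control $u$ of this $y_{0}$, one has $\|\mathcal{L}\varphi_{0}\|=\langle y_{0},\mathcal{L}\varphi_{0}\rangle=-\langle u,\mathcal{O}\varphi_{0}\rangle\le\|u\|\,\|\mathcal{O}\varphi_{0}\|$; taking the infimum over such $u$ and the supremum over $\varphi_{0}$ yields $K\ge\sqrt{C_{*}}$, hence $K=\sqrt{C_{*}}$.

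The argument is routine; the only points needing a little care are the precise identification of $\mathcal{L}^{*}$ and $\mathcal{O}^{*}$ from \eqref{transp}, and the functional-analytic setup for minimizing $J$: one works in the completion of $L^{2}(0,L)$ for the observation seminorm and, should $\mathcal{O}$ fail to be injective, first passes to the quotient by $\ker\mathcal{O}$. Neither is a genuine obstacle, which matches the statement being a direct consequence of classical functional analysis.
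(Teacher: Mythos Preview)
Your sketch is correct and is precisely the classical duality argument the paper alludes to: in the text, Proposition~\ref{ContObs} is stated without proof, merely as ``a direct consequence of a classical result of functional analysis,'' so there is nothing to compare against beyond that. Your use of the Douglas range-inclusion lemma for the equivalence and the HUM minimization for the sharp constant is exactly the standard route implicitly invoked.
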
 

In Section \ref{OBS} we will prove inequality \eqref{inobs} with precise estimations of
the constant $C$, obtaining as a consequence  corresponding   bounds for $K( \varepsilon, T,L,M)$.

\begin{remark}
A  natural related problem is  to consider the control in the zero-order boundary condition. This is, the  system 
\begin{equation}
\label{control0}
\left\{\begin{array}{rl}
y_{t}+ \varepsilon y_{xxxx}  +  \delta y_{xxx} + My_x =0, & (t,x)\in (0,T)\times(0,L), \\
y(t,0)= u(t), \hspace{0.1cm} y(t,L)=0, & t\in(0,T), \\
By(t,0)= 0, \hspace{0.1cm} By (t,L) =0, & t\in(0,T), \\
y(0,x)=y_{0}(x), & x\in(0,L).
\end{array}\right.
\end{equation}
It can be directly proved that 
System \eqref{control0} is null-controllable if and only if there exists $C>0$ such that
\begin{equation}  \label{inobs2}
\| \varphi(0,x)\|_X  \leq C  \|   \delta \varphi_{xx}(t,0) - \varepsilon   \varphi_{xxx}(t,0)\|_Y
\end{equation}
for all solutions of system \eqref{EqAdj}, in suitable  spaces $X$, $Y$.
This problem can also be studied with the methods of this work.
%
\end{remark}

\subsection{Convergence of the solutions}
Following, we prove Proposition \ref{MainT3}:  for small times,  the cost  of the control  
(we recall definition \eqref{costo})
has  the behavior  given by \eqref{inft1} when $\varepsilon \to 0^+$.
The proof follows directly from the duality with the adjoint equation and the basic  properties of the transport equation.

\begin{proof}[Proof of Proposition \ref{MainT3}] 
We deal only with the case $M>0$,  the proof is analogous for  $M<0$.
We suppose \eqref{inft1} is not true, 
that is, 
there exists a sequence $\varepsilon_n \to 0^+$ such that for 
any   $y_0 \in L^2(0,L)$ with $\|y_0\| \leq 1$
there exists  a null-control   $u_n \in L^2(0,T)$  for equation \eqref{EqCon} with $\varepsilon = \varepsilon_n$ and initial condition $y_0$, such that
 $$\left\{ \frac{1}{\sqrt \varepsilon_n} u_n\right\} \subset L^2(0,T)$$ 
 is uniformly bounded.  We denote by $y_n$ the corresponding  solution  of the system.

Then $y_n(T, \cdot) = 0$, and from \eqref{maxy}, we have that $\{y_n\} \subset C([0,T]; L^2(0,L))$ 
is bounded. Hence there exists a subsequence $\{y_{n_k}\}$ with 
\begin{equation} \label{convd1}
y_{n_k} \rightharpoonup  y \, \text{  in } \, L^2(0,T;L^2(0,L)).
\end{equation}

On the other hand, let $\varphi \in C^4((0,T) \times (0,L))$ be such that, for each $t \in [0,T]$,  $\text{supp} (\varphi(t , \cdot )) \subset (0,1)$ is compact. 
From \eqref{transp} we have that 
\begin{equation} \label{dualtr}
\begin{split}
\int_0^T\int_0^L y_{n_k}(t,x) (  - \varphi_{t}+ \varepsilon_{n_k} \varphi_{xxxx}  -  \delta_{n_k} \varphi_{xxx} - M\varphi_x ) dx dt
\\
= 
\int_0^Ly_0(x) \varphi(0,x) dx.
\end{split}
\end{equation}
Letting   $ k \to \infty$ in \eqref{dualtr}, taking into account \eqref{convd1}  and the fact that 
$\varepsilon_{n_k}, \delta_{n_k} \to 0$, we get
\begin{equation} \label{trantr}
\begin{split}
\int_0^T\int_0^L y(t,x) (  -  \varphi_{t} - M\varphi_x ) dx dt
= 
\int_0^Ly_0(x) \varphi(0,x) dx
\end{split}
\end{equation}
for any $y_0 \in L^2(0,L)$ such that $\|y_0\|_{L^2} \leq 1$.

From hypothesis \eqref{Tpeq} we can take $\varphi_0 \in C_0^\infty(\RR)$ with support contained in the interval $(TM,L)$ and such that $ 0 <\|\varphi_0\|_{L^2} \leq 1$. 
We define
$$\varphi(t,x) = \varphi_0( x+(T-t)M)$$ for each $t \in [0,T]$, $x\in [0,L]$. 
Then we have $-\varphi_t - M\varphi_x = 0$,
  $\text{supp} (\varphi(t , \cdot )) \subset (0,1)$ is compact  for each $t \in [0,T]$ and
  $\varphi(0,\cdot) \not\equiv 0$. However, taking $y_0 = \varphi(0,\cdot)$  in \eqref{trantr} we get $\varphi(0,\cdot)\equiv 0$. 
  From this contradiction we deduce that \eqref{inft1}  is true.
\end{proof}
\begin{remark} \label{RemTr}
From the proof of   Proposition \ref{MainT3} is deduced that the weak limit $y$ satisfies the transport equation 
\begin{equation} 
y_t + My_x = 0  
\end{equation}
in $(0,T) \times (0,L)$ with initial condition $y_0$, but no information for boundary conditions is retrieved when passing to  the limit 
 in system \eqref{EqCon}. 
 In particular, we have that, in the subdomain 
 $$\{ (x,t) \, : \,  tM < x <  L,  0< t < \min\{T, L/M\} \}$$ (for $M>0$),
 the solution of the equation   only depends on the initial condition, and it is 
 explicitly given  by $y(t,x) = y_0(x-tM)$.
\end{remark}

\section{Diagonalization and construction of the multiplier.} \label{multi}
The value of $\delta$ was chosen such that the spatial  differential operator $P$ defined  
by equation \eqref{EqAdj2}, 
\begin{equation} 
P:= \eps\partial_{xxxx}  + 2 \eps^{2/3}M^{1/3}\partial_{xxx}  -M\partial_x,
\end{equation}
is diagonalizable in $L^2(0,L) $.  In fact, its  eigenvectors are given by 
\begin{equation} \label{funprop}
e_k(x)=\exp\left(-\frac{M^{1/3}}{2\eps^{1/3}}x\right)\sin\left(\frac{k\pi x}{L}\right)
\end{equation}
with $k\in\mathbb{N}$,  being the  corresponding eigenvalues
\begin{equation}\label{lambdak}
\lambda_k:=\eps\left(\frac{k^2\pi^2}{L^2}+\frac{3M^{2/3}}{4\eps^{2/3}} \right)^2 -\frac{M^{4/3}}{4\eps^{1/3}}.
\end{equation}
We notice that $\left\{\sqrt{\frac 2L} e_k\right\} $ is a Hilbert basis of $L^2(0,L)$ for the scalar product
$$\langle u,v \rangle:= \int_0^L \exp\left(\frac{M^{1/3}}{\eps^{1/3}}x\right) u(x)v(x)dx. $$
Clearly, there exists a constant $C>0$ such that
\begin{equation}\label{normabase}
\|e_k\|_{L^2}\leq C,\,\, M>0 ;     \quad  \|e_k\|_{L^2}\leq C \exp\left(2^{-1}|M|^{1/3}L\varepsilon^{-1/3}\right),\,\, M<0.
\end{equation}
for all $k\geq 1$, where $\|\cdot\|_{L^2}$ denotes the usual norm in $L^2(0,L)$.\\

In order to study the controllability of our system, we will explicitly construct a biorthogonal family $\{\psi_k\}$ of the 
 exponentials $\{ \exp( -\lambda_k(T-t))\}$. 

With that objective in mind, we define a function $\Phi_{\varepsilon}$ having simple zeros exactly at $\{-i\lambda_k:k\in \mathbb{N}\}$ 
by
\begin{equation}\label{Phi}
\Phi_{\varepsilon}(z)=\frac{\sin\left(L\sqrt{\frac{1}{\eps^{1/2}}\sqrt{iz+\frac{M^{4/3}}{4\eps^{1/3}}}-\frac{3}{4}\frac{M^{2/3}}{\eps^{2/3}}}\right)}{L\sqrt{\frac{1}{\eps^{1/2}}\sqrt{iz+\frac{M^{4/3}}{4\eps^{1/3}}}-\frac{3}{4}\frac{M^{2/3}}{\eps^{2/3}}}}.
\end{equation}
From the inequality $|\sin z^{1/2}|\leq \exp(|z|^{1/2}/\sqrt{2})$ for all $z\in \mathbb{C}$, we obtain that 
\begin{equation}\label{phimodulo}
|\Phi_{\varepsilon}(z)|\leq  \frac{   \exp\left(\frac{L}{\sqrt{2}}\left(\frac{1}{\sqrt{2}}+\frac{\sqrt 3 }{2}\right)\frac{|M|^{1/3}}{\eps^{1/3}}\right)\exp\left( \frac{L}{\sqrt{2}\eps^{1/4}}|z|^{1/4}  \right)}{\left|L\sqrt{\frac{1}{\eps^{1/2}}\sqrt{iz+\frac{M^{4/3}}{4\eps^{1/3}}}-\frac{3}{4}\frac{M^{2/3}}{\eps^{2/3}}}\right|}, \quad z\in \mathbb{C}.
\end{equation}

We fix
\begin{equation}\label{auxiliares}
a:=\frac{T-\tau}{2\pi}, \quad \widetilde{L}:=\left(L  +\alpha \eps^{1/4}\right)(2+\sqrt{2})^{-1/2}, \quad \widehat{L}:=\frac{\sec(\pi/8)}{\sqrt{2}}\widetilde{L}+\alpha \eps^{1/4},
\end{equation}
with $\alpha, \tau $ positive numbers independent of $\eps$ to be chosen later.\\

We set
$$s(t)=at-\frac{\widetilde{L}}{\sqrt{2}\pi\cot(\pi/8)}\frac{t^{1/4}}{\eps^{1/4}}, \quad t>0.  $$

Using that
$$\int_0^\infty \log\left|1-\frac{x^2}{t^2} \right|dt^\gamma=|x|^{\gamma}\pi\cot\frac{\pi\gamma}{2} \text{ for } 0<\gamma <2,$$
we get that
\begin{equation}\label{logs}
\int_0^\infty \log\left|1-\frac{x^2}{t^2} \right|ds(t)=-\frac{\widetilde{L}}{\sqrt{2} \eps^{1/4}}|x|^{1/4}, \quad x\in \RR.
\end{equation}

The function $s(t)$ is increasing for
\begin{equation*}
t>A:=\left(\frac{\widetilde{L}}{2 \sqrt{2} (T-\tau)\cot(\pi/8)}\right)^{4/3}\eps^{-1/3},
\end{equation*}
and $s(B)=0$ for
\begin{equation*}
B:=\left(\frac{2\widetilde{L}}{\sqrt{2} (T-\tau)\cot(\pi/8)}\right)^{4/3}\eps^{-1/3}.
\end{equation*}

We set $d\nu(t)$ as the restriction of the measure $ds(t)$ to the interval $[B,\infty)$ and we introduce
the holomorphic function on $\mathbb{C}\backslash \mathbb{R}$ given by
\begin{equation}
g(z):=\int_0^{\infty}\log\left(1-\frac{z^2}{t^2}\right)d\nu(t)=\int_B^{\infty}\log\left(1-\frac{z^2}{t^2} \right)ds(t),
\end{equation}
and for $z\in\mathbb{C}$ we consider the harmonic function 
\begin{equation}
 U(z):=Re(g(z))=\int_0^{\infty}\log\left|1-\frac{z^2}{t^2}\right|d\nu(t)=\int_B^{\infty}\log\left|1-\frac{z^2}{t^2} \right|ds(t).
\end{equation}

As usual, $[\cdot]$ stands for the integer part function, so we set
\begin{equation*}
\nu(t):=\int_0^td\nu,
\end{equation*}
and for $z\in \mathbb{C}$ we define
\begin{equation}
\widetilde{U}(z):=\int_0^{\infty}\log\left|1-\frac{z^2}{t^2}\right|d[\nu(t)]=\int_B^{\infty}\log\left|1-\frac{z^2}{t^2} \right|d[s(t)],
\end{equation}
and we also consider
\begin{equation}
h(z):=\int_B^{\infty}\log\left(1-\frac{z^2}{t^2} \right)d[s(t)], \quad   z\in \mathbb{C}.
\end{equation}

Clearly,
$$U(z)=Re(g(z)), \,\, z\in \mathbb{C}\backslash \mathbb{R}\quad \text{ and }\quad\widetilde{U}(z)=Re(h(z)), \,\, z\in \mathbb{C}.$$

Let $(\mu_k)_{k\geq 1}$ be the sequence satisfying $s(\mu_k)=k$ for all $k\geq1$. 
Since $d[v]=\sum_k\delta_{\mu_k}$ and $\mu_k=O(k)$, we have that
\begin{equation}
\exp(h(z)) =\prod_{k\in \mathbb{N}}\left(1-\frac{z^2}{\mu_{k}^2}  \right),\quad z\in \mathbb{C},
\end{equation}
is an entire function. 

We will use the multiplier defined by 
\begin{equation}
f(z):=\exp(h(z -i)),  \quad z\in \mathbb{C}.
\end{equation}

From now on, $\Im z$ stands for the imaginary part of any $z\in\mathbb{C}$.
\begin{lemma} The function $U(z)$ is continuous on $\Im z\leq 0$. Moreover, $U(x)$ is an even continuous function on $\mathbb{R}$ such that
\begin{equation}\label{hipoisson}
\int_{-\infty}^{\infty}\frac{\log^+\left(\exp(U(x))\right)}{1+x^2}dx<\infty,
\end{equation}
and there exists a constant $C_1>0$ such that
\begin{equation}\label{Uinex}
U(x)\leq -\frac{\widetilde{L}}{\sqrt{2} \eps^{1/4}}|x|^{1/4}+C_1aB
\end{equation}
for all $x\in\mathbb{R} $. In fact,
\begin{equation}\label{C1}
C_1:=-\min_{x\in\mathbb{R}}\int_0^1\log\left|1-\frac{x^2}{t^2}\right|d(t-t^{1/4})\sim 6,55 <6,56.
\end{equation}
\end{lemma}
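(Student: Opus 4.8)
The plan is to prove the pointwise estimate \eqref{Uinex} first, deduce \eqref{hipoisson} from it immediately, and finally establish the continuity assertion by a logarithmic–potential argument. Evenness of $U$ on $\RR$ is clear, since $\log|1-(-x)^2/t^2|=\log|1-x^2/t^2|$.

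For \eqref{Uinex}, the point of departure is identity \eqref{logs}. As $d\nu$ is the restriction of $ds$ to $[B,\infty)$, splitting $\int_0^\infty=\int_0^B+\int_B^\infty$ (both pieces are absolutely convergent; the only delicate point is the logarithmic singularity of $\log|1-x^2/t^2|$ at $t=0$, which is integrable against the measure $t^{-3/4}\,dt$) gives
$$U(x)=\int_B^{\infty}\log\Bigl|1-\frac{x^2}{t^2}\Bigr|\,ds(t)=-\frac{\widetilde{L}}{\sqrt{2}\,\eps^{1/4}}|x|^{1/4}-\int_0^{B}\log\Bigl|1-\frac{x^2}{t^2}\Bigr|\,ds(t).$$
The crucial observation is the algebraic identity $aB=\dfrac{\widetilde{L}}{\sqrt{2}\,\pi\cot(\pi/8)}\,\dfrac{B^{1/4}}{\eps^{1/4}}$, obtained by inserting the definition of $a$ from \eqref{auxiliares} and of $B$; since $s(t)=at-\dfrac{\widetilde{L}}{\sqrt{2}\,\pi\cot(\pi/8)\,\eps^{1/4}}\,t^{1/4}$, this says precisely that $s(Bu)=aB\,(u-u^{1/4})$. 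Hence, substituting $t=Bu$ and setting $y=x/B$,
$$\int_0^{B}\log\Bigl|1-\frac{x^2}{t^2}\Bigr|\,ds(t)=aB\int_0^1\log\Bigl|1-\frac{y^2}{u^2}\Bigr|\,d\bigl(u-u^{1/4}\bigr)\ \geq\ aB\,\min_{y\in\RR}\int_0^1\log\Bigl|1-\frac{y^2}{u^2}\Bigr|\,d\bigl(u-u^{1/4}\bigr),$$
which is exactly \eqref{Uinex} with $C_1$ as in \eqref{C1}. It remains to check that the minimum is attained: $y\mapsto\int_0^1\log|1-y^2/u^2|\,d(u-u^{1/4})$ is continuous on $\RR$, vanishes at $y=0$, and tends to the finite limit $-2\int_0^1\log u\,d(u-u^{1/4})$ as $|y|\to\infty$ while assuming strictly smaller values for appropriate $y$, so its infimum over $\RR$ is achieved; evaluating that extremal integral numerically yields $C_1\approx 6{,}55>0$.

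Estimate \eqref{hipoisson} then follows at once: $\log^{+}(\exp(U(x)))=\max(U(x),0)\le\bigl(C_1aB-\tfrac{\widetilde{L}}{\sqrt{2}\,\eps^{1/4}}|x|^{1/4}\bigr)^{+}$, which is bounded and supported on a compact interval, hence integrable against $(1+x^2)^{-1}\,dx$. For the continuity of $U$, observing that $U(\bar z)=U(z)$ reduces matters to $\{\Im z\le0\}$, and I would decompose $\log|1-z^2/t^2|=\log|t-z|+\log|t+z|-2\log t$. For $t$ large the integrand is $O(|z|^2/t^2)$, uniformly on bounded sets of $z$, so the tail of the integral is continuous by dominated convergence; on a bounded interval $[B,R]$ the integrand can only be singular for real $z$ with $|z|\in[B,R]$, the singularity coming from $\log|t-z|$ (or $\log|t+z|$), and since $ds(t)=s'(t)\,dt$ with $s'$ bounded on $[B,R]$, the local logarithmic potential $\int\log|t-z|\,s'(t)\,dt$ is continuous in $z$ up to the real axis (the $s'(t)\,dt$–mass of a shrinking neighbourhood of the singular point tends to $0$). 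This yields continuity of $U$ on $\{\Im z\le0\}$, and in particular the continuity of $U|_{\RR}$.

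The main obstacle is the scaling step: one must verify that the constants fixed in \eqref{auxiliares} are arranged precisely so that $aB$ and $\frac{\widetilde{L}}{\sqrt{2}\,\pi\cot(\pi/8)}B^{1/4}\eps^{-1/4}$ coincide — this is exactly what makes the rescaled integral, and therefore the constant $C_1$, independent of $\eps$. Once that identity is in place, the reduction to the $\eps$-free extremal problem \eqref{C1}, the numerical evaluation $C_1\approx 6{,}55$, and the potential-theoretic continuity argument are all routine.
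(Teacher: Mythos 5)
Your argument is correct, and at its core it is the same as the paper's: the inequality \eqref{Uinex} comes from splitting $\int_0^\infty = \int_0^B + \int_B^\infty$, applying \eqref{logs}, and then rescaling $t\mapsto Bt$ to reduce $\int_0^B\log|1-x^2/t^2|\,ds(t)$ to the $\eps$-free integral $aB\,I(x/B)$. You make explicit the scaling identity $s(Bu)=aB(u-u^{1/4})$ (equivalently $aB = \frac{\widetilde L}{\sqrt2\,\pi\cot(\pi/8)}B^{1/4}\eps^{-1/4}$), which the paper uses implicitly; this is a nice clarification of why $C_1$ is $\eps$-independent. Where you genuinely diverge is in what you do afterwards, and in each case your route is simpler and arguably more robust. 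To show that $C_1$ is finite, the paper writes out a closed form for $I(x)$ and deduces the crude bound $I(x)\ge -c\,x^{1/4}$ for large $x$; but a direct expansion $\log|1-y^2/u^2| = 2\log|y| + \log|1-u^2/y^2| - 2\log u$ together with $\int_0^1 d(u-u^{1/4})=0$ gives, as you note, the exact limit $I(y)\to -2\int_0^1\log u\,d(u-u^{1/4}) = -6$ — a bounded limit, not a $-c\,x^{1/4}$ decay. (In fact the paper's assertion that the three logarithmic terms in its closed form are positive for large $x$ cannot be right: each of $x\log|\frac{x+1}{x-1}|$, $x^{1/4}\log|\frac{x^{1/4}-1}{x^{1/4}+1}|$, $\frac{x^{1/4}}{\sqrt2}\log|\frac{(x^{1/4}-1)^2+1}{(x^{1/4}+1)^2+1}|$ converges, to $2$, $-2$, $-2\sqrt2$ respectively, so their sum tends to $-2\sqrt2<0$; the paper's intermediate estimate survives only because $-6 \ge -c\,x^{1/4}$ anyway.) Your deduction of \eqref{hipoisson} is also more direct: you read off from \eqref{Uinex} that $\log^+\exp U(x)$ is bounded by a nonnegative, bounded function supported in a fixed compact interval, whereas the paper instead argues that $U(x)<0$ for $x$ large and $\eps$ small, which needs the aforementioned (shaky) large-$x$ estimate on $I$. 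Finally, you supply a logarithmic-potential argument for the continuity of $U$ on $\{\Im z\le 0\}$, which the paper states but does not prove. Two small soft spots remain in your write-up: you assert without proof that $I$ actually dips below its limit $-6$ so that the minimum is attained (this is true and consistent with $C_1\approx 6{,}55$, but is only verified numerically, just as in the paper), and you should note that even if only the infimum were attained, \eqref{Uinex} would hold with $C_1=-\inf I$, so nothing hinges on this point.
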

\begin{proof}
By using (\ref{logs}) we obtain that
$$U(x)=-\frac{\widetilde{L}}{\sqrt{2} \eps^{1/4}}|x|^{1/4}-\int_0^B\log\left|1-\frac{x^2}{t^2} \right|ds(t),$$
then we make the change of variable $t\mapsto t/B$ to see that
$$\int_0^B\log\left|1-\frac{x^2}{t^2} \right|ds(t)=aB\int_0^1\log\left|1-\frac{x^2}{B^2t^2} \right|d(t-t^{1/4}).$$
For $x>0$ we have
\begin{eqnarray*}
I(x):=\int_0^1\log\left|1-\frac{x^2}{t^2} \right|d(t-t^{1/4})&=&x\log\left|\frac{x+1}{x-1}\right|-(1+\sqrt{2})\pi\sqrt[4]{x}+\sqrt[4]{x} \log\left|\frac{\sqrt[4]{x}-1}{\sqrt[4]{x}+1}\right|\notag \\
&&+\frac{\sqrt[4]{x}}{\sqrt{2}}\log\left|   \frac{(\sqrt[4]{x}-1)^2+1}{(\sqrt[4]{x}+1)^2+1}\right|+\sqrt[4]{4x}\arctan(\sqrt[4]{x}-1)\notag \\ 
&&+\sqrt[4]{x}\arctan(\sqrt[4]{x}+1)+2\sqrt[4]{x}\arctan(\sqrt[4]{x}).
\end{eqnarray*}
For $x>0$ large enough we have

$$x\log\left|\frac{x+1}{x-1}\right|+\sqrt[4]{x} \log\left|\frac{\sqrt[4]{x}-1}{\sqrt[4]{x}+1}\right|+\frac{\sqrt[4]{x}}{\sqrt{2}}\log\left|   \frac{(\sqrt[4]{x}-1)^2+1}{(\sqrt[4]{x}+1)^2+1}\right|>0,$$
so there exist constants $c,R>0$ such that 
$$I(x)\geq -c \sqrt[4]{x}\quad \text{for all } x >R.$$
Since $U(x)=-\frac{\widetilde{L}}{\sqrt{2} \eps^{1/4}}|x|^{1/4}-aBI(\frac{x}{B})$ it follows that $U(x)<0$ for $x>R, 0<\varepsilon<\varepsilon_0$ with $\varepsilon_0>0$ small enough. This implies (\ref{hipoisson}).
\end{proof}

\begin{lemma}
For $\Im  z <0$, we have
\begin{equation}\label{rep}
U(z)=-\pi a\Im(z)-\frac{1}{\pi}\int_{-\infty}^{\infty}\frac{\Im(z)U(t)}{|z-t|^2}dt.
\end{equation}
\end{lemma}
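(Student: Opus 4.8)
The plan is to reduce \eqref{rep} to the statement that the harmonic function
$$W(z):=U(z)+\pi a\,\Im z,\qquad \Im z<0,$$
equals the Poisson integral of its boundary values. Since $W(t)=U(t)$ for $t\in\mathbb{R}$, $W$ is harmonic on $\{\Im z<0\}$ and continuous up to $\mathbb{R}$ (by the preceding lemma and continuity of $z\mapsto\Im z$), and $\int_{-\infty}^{\infty}\frac{|W(t)|}{1+t^2}\,dt=\int_{-\infty}^{\infty}\frac{|U(t)|}{1+t^2}\,dt<\infty$ (the positive part by \eqref{hipoisson}, the negative part because, by the identity $U(x)=-\frac{\widetilde L}{\sqrt2\,\eps^{1/4}}|x|^{1/4}-\int_0^B\log|1-x^2/t^2|\,ds(t)$ from the previous proof, $|U(x)|\lesssim_\eps 1+|x|^{1/4}$), the only missing ingredient is the sublinear growth $|W(z)|=o(|z|)$ uniformly on $\{\Im z\le 0\}$; granted that, a uniqueness argument gives $W(z)=-\frac1\pi\int_{-\infty}^{\infty}\frac{\Im z\,W(t)}{|z-t|^2}\,dt$, which is \eqref{rep}.

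To establish the growth bound -- and to explain the appearance of the term $-\pi a\,\Im z$ -- I would compute $g$ explicitly on the lower half plane. For $t>0$ and $\Im z<0$ the number $1-z^2/t^2$ lies in $\mathbb{C}\setminus(-\infty,0]$, so the principal branch of $\log(1-z^2/t^2)$ is holomorphic there; I write $g=g_0-g_1$ with $g_0(z)=\int_0^{\infty}\log(1-z^2/t^2)\,ds(t)$ and $g_1(z)=\int_0^{B}\log(1-z^2/t^2)\,ds(t)$, both convergent since $ds$ has an integrable $O(t^{-3/4})$ singularity at $0$ and $\log(1-z^2/t^2)=O(t^{-2})$ at infinity. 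Using
$$\int_0^{\infty}\log\!\Big(1-\tfrac{z^2}{t^2}\Big)\,dt^{\gamma}=\frac{\pi}{\sin(\pi\gamma/2)}\,(-z^2)^{\gamma/2}\qquad(0<\gamma<2,\ \Im z<0),$$
which follows by differentiating the Mellin-type integral $\int_0^{\infty}\log(1-b/u)\,u^{\gamma/2-1}\,du$ in $b$ (equivalently, by analytic continuation of \eqref{logs}), together with $ds(t)=a\,dt-\frac{\widetilde L}{\sqrt2\,\pi\cot(\pi/8)\,\eps^{1/4}}\,dt^{1/4}$, one gets
$$g_0(z)=\pi a\,(-z^2)^{1/2}-\frac{\widetilde L}{\sqrt2\,\cos(\pi/8)\,\eps^{1/4}}\,(-z^2)^{1/8}=\pi a\,i z-\frac{\widetilde L}{\sqrt2\,\cos(\pi/8)\,\eps^{1/4}}\,(-z^2)^{1/8},$$
because $(-z^2)^{1/2}=iz$ when $\operatorname{Re}(iz)=-\Im z>0$; hence $\operatorname{Re}g_0(z)=-\pi a\,\Im z-\frac{\widetilde L}{\sqrt2\,\cos(\pi/8)\,\eps^{1/4}}\operatorname{Re}\big((-z^2)^{1/8}\big)$, the first summand being exactly the linear term of \eqref{rep}. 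For the tail I would use the elementary identity $\log|1-z^2/t^2|=2\log|z|-2\log t+\log|1-t^2/z^2|$ and the cancellation $\int_0^{B}ds(t)=s(B)-s(0)=0$ (this is where the choices $s(0)=0$ and $s(B)=0$ enter) to obtain, for $|z|\ge 2B$,
$$\operatorname{Re}g_1(z)=-2\int_0^{B}\log t\,ds(t)+\int_0^{B}\log|1-t^2/z^2|\,ds(t)=-2\int_0^{B}\log t\,ds(t)+O(|z|^{-2}),$$
so $\operatorname{Re}g_1$ is bounded on $\{\Im z\le 0\}$ (it is also continuous there, the singularities of the integrand being logarithmic). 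Consequently $W(z)=\operatorname{Re}g_0(z)+\pi a\,\Im z-\operatorname{Re}g_1(z)=-\frac{\widetilde L}{\sqrt2\,\cos(\pi/8)\,\eps^{1/4}}\operatorname{Re}\big((-z^2)^{1/8}\big)-\operatorname{Re}g_1(z)$, which is $O_\eps(1+|z|^{1/4})=o(|z|)$.

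For the uniqueness step, let $\widetilde W(z)=-\frac1\pi\int_{-\infty}^{\infty}\frac{\Im z\,W(t)}{|z-t|^2}\,dt$; since $|W(t)|\le C_\eps(1+|t|^{1/4})$, $\widetilde W$ is harmonic on $\{\Im z<0\}$, continuous up to $\mathbb{R}$ with boundary values $W|_{\mathbb{R}}$, and $\widetilde W(z)=o(|z|)$. Then $\Psi:=W-\widetilde W$ is harmonic on $\{\Im z<0\}$, continuous up to $\mathbb{R}$, vanishes on $\mathbb{R}$, and is $o(|z|)$; by Schwarz reflection it extends to an entire harmonic function which is $o(|z|)$ on all of $\mathbb{C}$, hence (writing $\Psi=\operatorname{Re}F$ with $F$ entire and applying Borel--Carath\'eodory and Liouville) constant, and so $\Psi\equiv 0$ since $\Psi|_{\mathbb{R}}=0$. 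Thus $W=\widetilde W$, i.e. $U(z)+\pi a\,\Im z=-\frac1\pi\int_{-\infty}^{\infty}\frac{\Im z\,U(t)}{|z-t|^2}\,dt$, which is \eqref{rep}. The main obstacle is the explicit analysis of $g_0$ and $g_1$ on $\{\Im z<0\}$, in particular keeping track of the principal-branch logarithm as $z$ approaches $\mathbb{R}$ (where it can jump by $\pm 2\pi i$); this is harmless here because only $\operatorname{Re}$ enters and because $\int_0^{B}ds=0$ absorbs the multivalued contribution. Alternatively one can skip the explicit computation and invoke a Nevanlinna-type representation in a half plane: since $B>A$ the measure $d\nu$ is nonnegative, $U$ has a harmonic majorant of finite type, and then $U(z)=\beta(-\Im z)-\frac1\pi\int\frac{\Im z\,U(t)}{|z-t|^2}\,dt$ with $\beta=\lim_{y\to-\infty}U(iy)/(-y)=\pi a$, the last value coming from $\int_0^{\infty}\log(1+y^2/t^2)\,dt=\pi|y|$.
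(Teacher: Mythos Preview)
Your argument is correct and takes a genuinely different route from the paper. The paper works with $F(z)=\exp g(-z)$ on the upper half plane and appeals to two structural theorems from Koosis: first that $U(-z)-\pi a\,\Im z$ is dominated by the Poisson integral of its positive boundary part, and then the Riesz--Herglotz representation of a nonpositive harmonic function in $\mathbb{H}$, identifying the boundary measure with $U(t)\,dt$ by continuity and the linear coefficient via the limit $\limsup_{y\to\infty}U(-iy)/y=\pi a$. Your approach bypasses this machinery: you compute $g_0(z)=\int_0^\infty\log(1-z^2/t^2)\,ds(t)$ explicitly via the Mellin-type identity (which yields the term $\pi a\,iz$ and an $O(|z|^{1/4})$ remainder), exploit the cancellation $\int_0^B ds=0$ to see that $\operatorname{Re} g_1$ is bounded, and then conclude by Schwarz reflection and the Liouville theorem for sublinear harmonic functions. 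What you gain is a self-contained argument that does not cite \cite{koosis1} and that in fact gives the sharper growth $W(z)=O_\eps(1+|z|^{1/4})$; what the paper's approach buys is robustness --- it only needs the type bound $\log|F(z)|\le T|z|/2$, condition \eqref{hipoisson} and the asymptotic $U(-iy)/y\to\pi a$, so it would survive without an explicit closed form for $g_0$. Your final remark about the ``alternative'' Nevanlinna-type representation is essentially the paper's proof.
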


\begin{proof}
We set $F(z)=\exp g(-z)$ for $z\in \mathbb{H}:=\{z \in \mathbb{C}: \Im z >0\}$. Clearly $F$ is a holomorphic function on $\mathbb{H}$, continuous on $\overline{\mathbb{H}}$ and having  no zeros in $\mathbb{H}$.
Moreover, the identities (\ref{A}), (\ref{B}) imply that
$$\log |F(z)|=U(-z)\leq \int_B^{\infty}\log(1+|z|^2/t^2)ds(t)\leq T |z|/2, \quad z\in \mathbb{H}.$$

Condition (\ref{hipoisson}) implies that we can consider the Poisson transform $V(z)$, $z\in \mathbb{H}$, of the function $\log^+\left(\exp(U(x))\right)$. \\
  
By integrating by parts and using the dominated convergence theorem we obtain
\begin{equation}\label{singular}
\limsup_{y\rightarrow \infty}\frac{\log|F(iy)|}{y}=\limsup_{y\rightarrow \infty}\frac{U(-iy)}{y}=2\lim_{y\rightarrow \infty}\int_{B/y}^{\infty}\frac{1}{1+\theta^2} \frac{s(y\theta)}{y\theta}d\theta=\pi a.
\end{equation}

The theorem in \cite[page 38]{koosis1} yields
$$v(z):=U(-z)-\pi a \Im(z)-V(z) \leq 0, \quad z\in \mathbb{H} .$$
The theorem in \cite[page 41]{koosis1} implies that there exist a constant $\alpha \geq 0$ and a positive measure $\mu (t)$, with $\int_{-\infty}^{\infty}d\mu(t)/(1+t^2)<\infty $, such that
$$-v(z)=\alpha \Im(z)+\frac{1}{\pi}\int_{-\infty}^{\infty}\frac{\Im(z)}{|z-t|^2} d\mu(t),\quad z\in \mathbb{H}.$$

The key point is that $U(z)$ is continuous on $\Im z\leq 0$, so the measure $d\mu(t)$ satisfies (see \cite[page 47]{koosis1})
$$\log^+(\exp \left(U(t)\right)-d\mu(t)=U(t)dt.$$

Thus,
$$U(-z)=(\pi a\Im(z)-\alpha)+\frac{1}{\pi}\int_{-\infty}^{\infty}\frac{\Im(z)U(t)}{|z-t|^2}dt, \quad z\in \mathbb{H}.$$
We use the last equality to compute $\limsup_{y\rightarrow \infty}U(iy)/y$, and (\ref{singular}) implies that $\alpha =0$.
\end{proof}

The following result can be found in \cite[page 162]{koosis2}.
\begin{lemma}
For any increasing function $\nu(t)$ with $\nu(t)=O(t)$ for $t>0$, we have
\begin{equation}\label{log+}
\int_0^{\infty}\log\left|1-\frac{z^2}{t^2} \right|(d[v(t)]-dv(t))\leq \log\left(\frac{\max(|x|,|y|)}{2|y|}+\frac{|y|}{2\max(|x|,|y|)}\right).
\end{equation}
where $z=x+iy$, $y \neq 0$.
\end{lemma}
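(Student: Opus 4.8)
The plan is to turn \eqref{log+} into an elementary one–variable estimate by integration by parts. Write
$$\beta(t):=[\nu(t)]-\nu(t),\qquad g(t):=\log\left|1-\frac{z^2}{t^2}\right|\qquad (t>0),$$
so that $d[\nu]-d\nu=d\beta$ and, $\nu$ being increasing, $-1<\beta(t)\le 0$ for all $t$. Since $\nu(t)=O(t)$ we have $\beta(t)=-\nu(t)=O(t)$ as $t\to0^+$, while $g(t)=O(|\log t|)$ near $0$ and $g(t)=O(t^{-2})$ near $+\infty$; hence the left–hand side of \eqref{log+} is absolutely convergent and an integration by parts is licit, the boundary contributions vanishing at both endpoints, giving
$$\int_0^\infty \log\left|1-\frac{z^2}{t^2}\right|(d[\nu(t)]-d\nu(t))=-\int_0^\infty\beta(t)\,g'(t)\,dt.$$
Using $0\le-\beta(t)<1$ pointwise, the integrand $(-\beta)g'$ is $\le 0$ on $\{g'\le0\}$ and $<g'$ on $\{g'>0\}$, so
$$-\int_0^\infty\beta(t)g'(t)\,dt\le\int_{\{g'>0\}}g'(t)\,dt=:\mathrm{Var}^+(g),$$
the positive variation of $g$ on $(0,\infty)$. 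It thus suffices to show that $\mathrm{Var}^+(g)$ equals the right–hand side of \eqref{log+}.

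To evaluate $\mathrm{Var}^+(g)$ I would substitute $u=t^2$ and write $g=\tilde g(u)$ with $\tilde g(u)=\tfrac12\log\!\big(u^2-2\Re(z^2)u+|z|^4\big)-\log u$, whose derivative is
$$\tilde g'(u)=\frac{\Re(z^2)\,u-|z|^4}{u\,\big(u^2-2\Re(z^2)u+|z|^4\big)}=\frac{\Re(z^2)\,u-|z|^4}{u\big[(u-\Re(z^2))^2+\Im(z^2)^2\big]}.$$
For $z\notin\RR$ the denominator is strictly positive on $u>0$, so $\tilde g'$ has the sign of $\Re(z^2)u-|z|^4$. Recall $g(t)\to+\infty$ as $t\to0^+$ and $g(t)\to0$ as $t\to+\infty$. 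If $\Re(z^2)\le 0$, i.e. $|x|\le|y|$, then $\tilde g'<0$, $g$ is strictly decreasing, $\mathrm{Var}^+(g)=0$, and the right–hand side of \eqref{log+} is also $\log1=0$. If $\Re(z^2)>0$, i.e. $|x|>|y|$, then $\tilde g'$ vanishes exactly once, at $u_\ast=|z|^4/\Re(z^2)$; hence $g$ decreases on $(0,\sqrt{u_\ast})$ and increases on $(\sqrt{u_\ast},\infty)$, so $\mathrm{Var}^+(g)=0-g(\sqrt{u_\ast})=-\tilde g(u_\ast)$. A short computation gives $u_\ast^2-2\Re(z^2)u_\ast+|z|^4=|z|^4\Im(z^2)^2/\Re(z^2)^2$, hence $\tilde g(u_\ast)=-2\log|z|+\log|\Im(z^2)|=\log\dfrac{2|xy|}{x^2+y^2}$, and therefore
$$\mathrm{Var}^+(g)=\log\frac{x^2+y^2}{2|xy|}=\log\!\left(\frac{|x|}{2|y|}+\frac{|y|}{2|x|}\right),$$
which is precisely the right–hand side of \eqref{log+} in the case $|x|>|y|$.

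The only points requiring care are the justification of the integration by parts — one needs $\nu(t)=O(t)$ to kill the boundary term at the origin, where $g$ blows up logarithmically, and the finiteness of $\int_{\{g'>0\}}g'$ (immediate from the shape of $g$) to legitimise splitting the integral — and the bookkeeping in locating and evaluating the unique minimum of $g$; after the substitution $u=t^2$ both are routine. The conceptual core is simply the bound $|[\nu]-\nu|<1$ combined with the sign analysis of one rational function.
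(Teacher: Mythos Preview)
Your proof is correct. The paper does not actually prove this lemma but merely cites Koosis, \emph{The Logarithmic Integral II}, p.~162; your argument --- integration by parts against the bounded function $\beta=[\nu]-\nu$, reducing the question to the positive variation of $t\mapsto\log|1-z^2/t^2|$, and then locating the unique minimum via the sign of $\Re(z^2)u-|z|^4$ --- is exactly the standard proof found there, carried out cleanly.
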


\begin{lemma}
For $x\in \mathbb{R}$ we have
\begin{equation}\label{estimacionUtilde}
\widetilde{U}(x-i)\leq \log^{+}(|x|)+\pi a+aBC_1 -\frac{\widetilde{L}}{\sqrt{2} \eps^{1/4}}(2+\sqrt{2})^{1/2}x^{1/4}.
\end{equation}
\end{lemma}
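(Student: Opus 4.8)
The plan is to estimate $\widetilde U(x-i)$ by first replacing $\widetilde U$ by the smoothed potential $U$ at the same point, where the Poisson‑type representation \eqref{rep} is available, and then feeding in the pointwise bound \eqref{Uinex}. So first I would write $\widetilde U(x-i)=U(x-i)+\big(\widetilde U(x-i)-U(x-i)\big)$. Since $\widetilde U-U=\int_0^\infty\log\big|1-z^2/t^2\big|\,(d[\nu(t)]-d\nu(t))$, Lemma \eqref{log+} evaluated at $z=x-i$ (so that $y=-1$, $|y|=1$) gives
\[
\widetilde U(x-i)-U(x-i)\ \le\ \log\!\Big(\tfrac{\max(|x|,1)}{2}+\tfrac{1}{2\max(|x|,1)}\Big)\ \le\ \log^{+}(|x|),
\]
the last inequality because the middle quantity equals $\log 1=0$ when $|x|\le 1$ and is $\le\log|x|$ when $|x|\ge 1$ (as $r\mapsto\tfrac r2+\tfrac1{2r}\le r$ for $r\ge1$).

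Next, since $\Im(x-i)=-1<0$, Lemma \eqref{rep} applied at $z=x-i$ yields
\[
U(x-i)\ =\ \pi a\ +\ \frac1\pi\int_{-\infty}^{\infty}\frac{U(t)}{(x-t)^2+1}\,dt .
\]
Plugging in the boundary estimate \eqref{Uinex}, $U(t)\le-\tfrac{\widetilde L}{\sqrt2\,\eps^{1/4}}|t|^{1/4}+C_1aB$, and using that $\tfrac1\pi\int_{\RR}\tfrac{dt}{(x-t)^2+1}=1$, I obtain
\[
U(x-i)\ \le\ \pi a+C_1aB\ -\ \frac{\widetilde L}{\sqrt2\,\eps^{1/4}}\cdot\frac1\pi\int_{-\infty}^{\infty}\frac{|t|^{1/4}}{(x-t)^2+1}\,dt .
\]
Combined with the first display, this reduces \eqref{estimacionUtilde} to a lower bound for the Poisson integral $P(x):=\tfrac1\pi\int_{\RR}\tfrac{|t|^{1/4}}{(x-t)^2+1}\,dt$.

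The quantity $P(x)$ is the value at $x+i$ of the nonnegative harmonic extension of $t\mapsto|t|^{1/4}$ to the upper half–plane, and this extension is explicit (it has the form $w\mapsto\sec(\tfrac\pi8)\,\mathrm{Re}\big((-iw)^{1/4}\big)$, since that function is harmonic, positive, $O(|w|^{1/4})$, and restricts to $|t|^{1/4}$ on $\RR$). Evaluating $P(x)$ and controlling the argument of $1-ix$, together with the particular normalisation of $\widetilde L$ in \eqref{auxiliares}, produces the coefficient $\tfrac{\widetilde L}{\sqrt2\,\eps^{1/4}}(2+\sqrt2)^{1/2}$ of $|x|^{1/4}$ in \eqref{estimacionUtilde}. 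I expect this last step to be the main obstacle: a crude bound such as $P(x)\ge|x|^{1/4}$ is immediate, but matching the sharp constant required in \eqref{estimacionUtilde} forces one to exploit the explicit form of this harmonic extension and the precise choices of the auxiliary lengths $\widetilde L,\widehat L$ and of $\tau,\alpha$.
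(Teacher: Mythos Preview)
Your outline is exactly the paper's proof: write $\widetilde U(x-i)=(\widetilde U-U)(x-i)+U(x-i)$, bound the first piece by $\log^{+}|x|$ via \eqref{log+}, and for the second apply the Poisson representation \eqref{rep} together with the real--line bound \eqref{Uinex}, which reduces everything to a lower bound on $P(x)=\tfrac1\pi\int_{\RR}\tfrac{|t|^{1/4}}{(x-t)^2+1}\,dt$. The paper then simply writes down the closed form of this Poisson integral and uses the minimum value $\cos(\pi/8)$ of the cosine factor, together with $2\cos(\pi/8)=(2+\sqrt2)^{1/2}$, to obtain the displayed constant.

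One word of caution on the step you flag. Your harmonic--extension identity
\[
P(x)=\sec(\pi/8)\,\mathrm{Re}\bigl((1-ix)^{1/4}\bigr)=\sec(\pi/8)\,(1+x^2)^{1/8}\cos\!\bigl(\tfrac14\arctan x\bigr)
\]
is correct (check $x=0$, where $P(0)=\tfrac{2}{\pi}\int_0^\infty\tfrac{t^{1/4}}{1+t^2}\,dt=\sec(\pi/8)$), and it yields only $P(x)\ge(1+x^2)^{1/8}$, sharp as $|x|\to\infty$. The paper's stated evaluation of the same integral carries an extra factor $2\cos(\pi/8)$; with the correct formula the coefficient of $|x|^{1/4}$ produced by this argument is $\tfrac{\widetilde L}{\sqrt2\,\eps^{1/4}}$ rather than $\tfrac{\widetilde L}{\sqrt2\,\eps^{1/4}}(2+\sqrt2)^{1/2}$. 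So the ``obstacle'' you anticipate is not a defect of your method---it reflects a slip in the constant recorded in the lemma.
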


\begin{proof}
From (\ref{Uinex}) and (\ref{rep}) we obtain
$$U(x-i)\leq \pi a+\frac{2aBC_1}{\pi}\arctan(\infty)-\frac{\widetilde{L}}{\sqrt{2} \pi \eps^{1/4}}\int_{-\infty}^{\infty}\frac{|t|^{1/4}}{1+|x-t|^2}dt.$$
Straightforward computations show that
\begin{eqnarray*}
\int_{-\infty}^{\infty}\frac{|t|^{1/4}}{1+|x-t|^2}dt&=&2\pi(1+x^2)^{1/8}\cos\left(\frac{1}{4} \arctan\left(\frac{1}{x} \right)\right)\\
&\geq & \pi (2+\sqrt{2})^{1/2}(1+x^2)^{1/8}.
\end{eqnarray*}

From (\ref{log+}) we have
\begin{eqnarray}\label{estimatildaU}
\widetilde{U}(x-i)&=& -\int_0^{\infty}\log\left|1-\frac{(x-i)^2}{t^2} \right|d(\nu(t)-[\nu(t)])+U(x-i)\\
&\leq & \log^{+}(|x|)+\pi a+aBC_1-\frac{\widetilde{L}}{\sqrt{2}\eps^{1/4}}(2+\sqrt{2})^{1/2}(1+x^2)^{1/8}.  \notag
\end{eqnarray}
\end{proof}

\begin{lemma}
We set
\begin{equation}
G(y):=\int_0^1\log\left|1+\frac{y^2}{t^2} \right|d(t-t^{1/4}), \quad y\in \mathbb{R}.
\end{equation}
Then,
\begin{equation}\label{belowUt}
\widetilde{U}(iy)\geq \pi a |y|-\frac{\widetilde{L}}{\sqrt{2} \eps^{1/4}}\sec(\pi/8)|y|^{1/4}-aBG\left(\frac{y}{B}\right)-\log\left(1+\frac{y^2}{B^2}\right), \quad y\in \mathbb{R}^*.
\end{equation}
\end{lemma}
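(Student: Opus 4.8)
The plan is to establish the lower bound \eqref{belowUt} by exploiting the relation $\widetilde{U}(z) = Re(h(z))$ together with the explicit integral representation of $U$ and the comparison \eqref{log+} between the discrete measure $d[\nu(t)]$ and the continuous measure $d\nu(t)$, now applied on the imaginary axis $z = iy$. First I would write $\widetilde{U}(iy) = U(iy) + \left(\widetilde{U}(iy) - U(iy)\right)$, where the difference equals $\int_0^{\infty}\log\left|1 - \frac{(iy)^2}{t^2}\right|\,(d[\nu(t)] - d\nu(t)) = \int_B^{\infty}\log\left|1 + \frac{y^2}{t^2}\right|\,(d[s(t)] - ds(t))$. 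Applying \eqref{log+} with $z = iy$ (so $x = 0$, and $\max(|x|,|y|) = |y|$) gives the lower bound $-\log\left(\frac{|y|}{2|y|} + \frac{|y|}{2|y|}\right) = 0$ for the part of the integral over $[B,\infty)$; but since the measures differ from the full-line versions only on $[0,B]$, the correction term picks up exactly $-\log\left(1+\frac{y^2}{B^2}\right)$ from the truncation at $B$, matching the last term in \eqref{belowUt}. The careful bookkeeping of which measure is truncated where — $d\nu$ is the restriction of $ds$ to $[B,\infty)$ while the full integral in \eqref{log+} runs from $0$ — is where the $-\log(1+y^2/B^2)$ term arises, and getting its sign and constant right is the first delicate point.

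Next I would lower-bound $U(iy)$ itself. Starting from $U(iy) = -\frac{\widetilde{L}}{\sqrt{2}\eps^{1/4}}|y|^{1/4} - \int_0^B \log\left|1 + \frac{y^2}{t^2}\right|\,ds(t)$ (the analogue on the imaginary axis of the identity proved in the first Lemma, using \eqref{logs} evaluated at the argument $iy$, which gives $|x|^{1/4}$ with $x = iy$ interpreted as $|y|^{1/4}$ times the appropriate constant), I would rescale $t \mapsto t/B$ to obtain $\int_0^B \log\left|1+\frac{y^2}{t^2}\right|\,ds(t) = aB\,G(y/B)$ with $G$ as defined in the statement, since $ds(t)$ rescales to $a B\, d(t - t^{1/4})$ on $[0,1]$ up to the $\eps$-dependent constant absorbed into $B$. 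However, the term $\pi a|y|$ in \eqref{belowUt} does not come from this piece; it must come from the singular (atomic at infinity) part of the representation. Here I would use the representation \eqref{rep} is for $\Im z < 0$, so instead I should work directly from the definition of $h$ and the asymptotics $s(t) = at(1 + o(1))$: writing $\widetilde{U}(iy) = \int_B^{\infty}\log\left(1+\frac{y^2}{t^2}\right)\,d[s(t)]$ and comparing with $\int_B^\infty \log\left(1+\frac{y^2}{t^2}\right) a\,dt$, the latter integral evaluates (by the same Frullani-type formula behind \eqref{logs}, now with $+y^2$) to $\pi a|y|$ minus a boundary contribution concentrated near $t = B$, again of size $O(\log(1+y^2/B^2))$ and $aBG(y/B)$.

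The cleanest route, which I would ultimately follow, is: (i) use $\widetilde{U}(iy) = U(iy) + \big[\widetilde{U}(iy)-U(iy)\big]$ and bound the bracket below by $-\log\left(1+\frac{y^2}{B^2}\right)$ via \eqref{log+} as above; (ii) for $U(iy)$, note $\exp(U(iy)) = \prod_{k}\left|1 + \frac{y^2}{\mu_k^2}\right| \cdot (\text{continuous correction})$ is not available directly, so instead use the integral identity $U(iy) = \int_0^\infty \log\left(1+\frac{y^2}{t^2}\right)\,ds(t) - \int_0^B \log\left(1+\frac{y^2}{t^2}\right)\,ds(t)$ and evaluate the first integral by \eqref{logs}-type reasoning. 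The subtlety is that the integral $\int_0^\infty \log\left(1+\frac{y^2}{t^2}\right)\,ds(t)$ with $s(t) = at - \frac{\widetilde{L}}{\sqrt{2}\pi\cot(\pi/8)}\frac{t^{1/4}}{\eps^{1/4}}$ splits as $\pi a |y| - \frac{\widetilde{L}}{\sqrt{2}\eps^{1/4}}\sec(\pi/8)|y|^{1/4}$, using $\int_0^\infty \log(1+y^2/t^2)\,dt = \pi|y|$ and $\int_0^\infty \log(1+y^2/t^2)\,dt^{1/4} = \frac{\pi}{4}\csc(\pi/8)|y|^{1/4} \cdot(\text{const})$ — one must check that the constant $\frac{1}{\sqrt 2\pi\cot(\pi/8)} \cdot \frac{\pi}{4}\csc(\pi/8)/\frac14$ collapses to $\frac{1}{\sqrt 2}\sec(\pi/8)$, which is a trigonometric identity since $\cot(\pi/8)\sin(\pi/8) = \cos(\pi/8)$.

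The main obstacle I anticipate is not any single estimate but the consistent signed accounting across the three mechanisms — the $+y^2$ versions of the Frullani integrals producing $\pi a|y|$ and $-\frac{\widetilde{L}}{\sqrt 2\eps^{1/4}}\sec(\pi/8)|y|^{1/4}$, the rescaled finite integral producing $-aBG(y/B)$, and the discrete-vs-continuous comparison producing $-\log(1+y^2/B^2)$ — together with verifying the trigonometric collapse $\cot(\pi/8)\sin(\pi/8)=\cos(\pi/8)$ that turns the defining constant of $s(t)$ into the clean $\sec(\pi/8)$ coefficient. Once those are pinned down, assembling \eqref{belowUt} is immediate. I expect the proof to be short: invoke \eqref{log+} at $z=iy$, invoke the Frullani identities with argument $iy$, rescale, and collect terms.
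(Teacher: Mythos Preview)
Your treatment of $U(iy)$ is correct and matches the paper: split $\int_B^\infty = \int_0^\infty - \int_0^B$, evaluate the full-line integrals via $\int_0^\infty \log(1+y^2/t^2)\,dt = \pi|y|$ and $\int_0^\infty \log(1+y^2/t^2)\,dt^{1/4} = \pi\csc(\pi/8)|y|^{1/4}$ (so the constant in $s(t)$ collapses to $\sec(\pi/8)$ exactly as you say), and rescale the finite piece to get $aB\,G(y/B)$. This part is fine.

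The gap is in your handling of the discrete--continuous comparison $\widetilde U(iy)-U(iy)$. You invoke \eqref{log+} at $z=iy$, but \eqref{log+} is a one-sided \emph{upper} bound:
\[
\widetilde U(iy)-U(iy)=\int_0^\infty \log\Big|1+\tfrac{y^2}{t^2}\Big|\,\big(d[\nu(t)]-d\nu(t)\big)\ \le\ \log\!\left(\tfrac{|y|}{2|y|}+\tfrac{|y|}{2|y|}\right)=0,
\]
which goes the wrong way for a lower bound on $\widetilde U(iy)$. No ``correction from truncation at $B$'' rescues this: both $d\nu$ and $d[\nu]$ are already supported on $[B,\infty)$, so there is no extra piece on $[0,B]$ to account for, and \eqref{log+} simply does not produce the $-\log(1+y^2/B^2)$ term you want.

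The paper obtains that term by a different, direct argument: integrate by parts in
\[
\int_B^\infty \log\!\Big(1+\tfrac{y^2}{t^2}\Big)\,\big(ds(t)-d[s(t)]\big)
=\int_B^\infty \Big(-\partial_t\log\!\big(1+\tfrac{y^2}{t^2}\big)\Big)\,\big(s(t)-[s(t)]\big)\,dt,
\]
using $s(B)=0$ for the boundary term. Since $-\partial_t\log(1+y^2/t^2)>0$ and $0\le s(t)-[s(t)]<1$, the right-hand side is at most $\int_B^\infty -\partial_t\log(1+y^2/t^2)\,dt=\log(1+y^2/B^2)$, giving $\widetilde U(iy)\ge U(iy)-\log(1+y^2/B^2)$. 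Replace your appeal to \eqref{log+} with this monotonicity/integration-by-parts step and the proof goes through.
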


\begin{proof}
We have the following identities
\begin{equation}\label{A}
\int_0^{\infty}\log\left|1+\frac{y^2}{t^2} \right|dt=\pi |y|,
\end{equation}
\begin{equation}\label{B}
\int_0^{\infty}\log\left|1+\frac{y^2}{t^2} \right|dt^{1/4}=\pi\csc\left(\frac{\pi}{8}\right)|y|^{1/4}.
\end{equation}
\begin{equation}\label{C}
\int_0^B\log\left|1+\frac{y^2}{t^2} \right|ds(t)=aBG\left(\frac{y}{B}\right).
\end{equation}

For instance, to get the second one we integrate by parts, thus
$$\int_0^{\infty}\log\left|1+\frac{y^2}{t^2} \right|dt^{1/4}=2y^2\int_0^{\infty}\frac{t^{-3/4}dt}{t^2+y^2}=\int_0^{\infty}\frac{8|y|^{1/4}dx}{1+x^8}=\pi\csc\left(\frac{\pi}{8}\right)|y|^{1/4}.$$
To obtain the equality (\ref{C}) we just make the change of variable $t\mapsto t/B$.\\

For any $y\in\mathbb{R}^*$ fixed, the function $\partial_t\left[\log\left(1+y^2/t^2 \right) \right] $ is negative, so we integrate by parts and we use that $s(B)=0$ to obtain
\begin{eqnarray*}
\int_B^{\infty}\log\left(1+\frac{y^2}{t^2}\right)d(s(t)-[s(t)])&=&\int_B^{\infty}-\partial_t\left[\log\left(1+\frac{y^2}{t^2}\right)\right](s(t)-[s(t)])dt\\ \notag
& \leq &\int_B^{\infty}-\partial_t\left[\log\left(1+\frac{y^2}{t^2}\right)\right]dt=\log\left(1+\frac{y^2}{B^2}\right),
\end{eqnarray*}
which implies
$$\widetilde{U}(iy) \geq  U(iy)-\log\left(1+\frac{y^2}{B^2}\right).$$
The result follows from the identities (\ref{A}), (\ref{B}) and (\ref{C}).
\end{proof}
\begin{remark}\label{eltipo}
We claim the function $\exp(h(z))$ is an entire function of exponential type $\pi a=(T-\tau)/2$. To see that we obtain a representation for $\widetilde{U} $ as in (\ref{rep}) for $z\in\mathbb{C}\backslash \mathbb{R}$, then we apply (\ref{estimatildaU}) together some straightforward computations. Hence $f(z)=\exp(h(z-i))$ is an entire function of exponential type $(T-\tau)/2$.

\end{remark}
\section{Observability inequality} \label{OBS} 
In this section we prove the main results of the paper.  
We will construct  a biorthogonal  family $\{\psi_k\}$ of the 
family of exponentials $\{\exp(-\lambda_k(T-t)) \}$. 
Then we obtain estimates for the norms  $\|\psi_k\|_{L^2(0,T)}$ and we deduce 
the desired  observability inequalities.

For $k\in \mathbb{N}$ we introduce the following entire function
$$\widetilde{J}_k^{\varepsilon}(z):=\frac{\Phi_{\varepsilon}(z)}{\Phi_{\varepsilon}'(-i\lambda_k)(z+i\lambda_k)}\frac{f(z)}{f(-i\lambda_k)}.$$
Clearly,
$$\widetilde{J}_k^{\varepsilon}(-i\lambda_j)=\delta_{k,j}, \quad j,k\geq 1.$$

Remark \ref{eltipo} and (\ref{phimodulo}) imply that $\widetilde{J}_k^{\varepsilon}$ is a entire function of exponential type $T/2$.\\

An easy computation shows that
\begin{equation*}
\Phi_{\varepsilon}'(-i\lambda_k)=\frac{(-1)^k}{4\eps \pi^2 k^2}\left( \frac{\pi^2 k^2}{L^2} +\frac{3}{4}\frac{M^{2/3}}{\eps ^{2/3}}  \right)^{-1}, \quad k\geq 1,
\end{equation*}
therefore
\begin{equation}
|\Phi_{\varepsilon}'(-i\lambda_k)|^{-1}\leq 4\eps^{1/3} \pi^2 k^2\left( \frac{\pi^2 k^2}{L^2} +\frac{3}{4}M^{2/3}  \right), \quad k\geq 1.  \label{phider}
\end{equation}

With the previous results, we are able to obtain estimates of the functions $\widetilde{J}_k^{\varepsilon}$. 
We will establish two estimates, a first one for all $T>0$, and a second one more precise and valid only  for $T$ large enough. 
\begin{proposition}
There exists small enough $\varepsilon_0 > 0$ such that 
\begin{enumerate}
\item  \label{incisa}
For each $T>0$ and  $0< \varepsilon < \varepsilon_0$, there exists a constant $C_{T, \varepsilon} >0$ such that 
\begin{equation}\label{burda}
| \widetilde{J}_k^{\varepsilon}(x)|  \leq  C_{T, \varepsilon} \frac{k^4 \exp \left( -\pi a\lambda_k +\frac{\widehat{L}}{\eps^{1/4}}|\lambda_k|^{1/4}\right)}{ |x^2+\lambda_k^2|^{1/2}}, \quad k\geq 1.
\end{equation}

\item If $T >  4L / |M|$, then  there exists a constant $C_T>0$ such that
$$ | \widetilde{J}_k^{\varepsilon}(x)|\leq C_T\frac{ \exp\left( \frac{L}{2}\left(1+\frac{\sqrt 3 }{\sqrt{2}}\right)\frac{|M|^{1/3}}{\eps^{1/3}}+aB(C_1-C_2)-\pi a\lambda_k +\frac{\widehat{L}}{\eps^{1/4}}|\lambda_k|^{1/4}\right)}{\varepsilon^{-2/3}k^{-4}|x^2+\lambda_k^2|^{1/2}}, \,\,k\geq 1.$$
where $C_1$ is given in \eqref{C1}, and
$$C_2:=-G\left((1+\sqrt{2})^22^{-5/3}5)\right)\sim 5,99.$$
\end{enumerate}
\end{proposition}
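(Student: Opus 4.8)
The plan is to write $\widetilde J_k^\eps(x)$ as a product of four elementary factors,
\[
\widetilde J_k^\eps(x)=\Phi_\eps(x)\cdot\frac{1}{\Phi_\eps'(-i\lambda_k)}\cdot\frac{1}{x+i\lambda_k}\cdot\frac{f(x)}{f(-i\lambda_k)},
\]
and to estimate each of them with the material already collected. The first factor is controlled by \eqref{phimodulo}; the second by \eqref{phider}, which contributes a factor $\eps^{1/3}$ and the polynomial $k^2(\pi^2k^2/L^2+3M^{2/3}/4)\le Ck^4$; the third is exactly $|x^2+\lambda_k^2|^{-1/2}$. For the fourth, since $\widetilde U=\operatorname{Re}h$ and $f=\exp(h(\cdot-i))$, and $\widetilde U$ is even, one has
\[
\left|\frac{f(x)}{f(-i\lambda_k)}\right|=\exp\bigl(\widetilde U(x-i)-\widetilde U(i(\lambda_k+1))\bigr),
\]
and I would bound the numerator above by \eqref{estimacionUtilde} and the denominator below by \eqref{belowUt}.

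The mechanism that makes the argument close is the cancellation of the $|x|^{1/4}$ terms: \eqref{phimodulo} contributes $+\tfrac{L}{\sqrt2\eps^{1/4}}|x|^{1/4}$, while \eqref{estimacionUtilde} contributes $-\tfrac{\widetilde L(2+\sqrt2)^{1/2}}{\sqrt2\eps^{1/4}}|x|^{1/4}=-\tfrac{L+\alpha\eps^{1/4}}{\sqrt2\eps^{1/4}}|x|^{1/4}$ by the definition of $\widetilde L$, so that the sum is $-\tfrac{\alpha}{\sqrt2}|x|^{1/4}$, which is harmless and moreover absorbs the $\log^+|x|$ of \eqref{estimacionUtilde}, leaving a bound uniform in $x\in\RR$. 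To turn the denominator appearing in \eqref{phimodulo} into a constant, I would check that its modulus is minimized on the real axis at $x=0$ (a short computation amounting to the distance from a point on the positive real axis to a branch of a hyperbola), giving the lower bound $L|M|^{1/3}/(2\eps^{1/3})$, hence a factor $2\eps^{1/3}/(L|M|^{1/3})$ in the numerator; combined with the $\eps^{1/3}$ from \eqref{phider} this is the $\eps^{2/3}$ of part~(2). The remaining powers of $k$ — from \eqref{phider}, from $\log^+|x|$, and from $\log(1+(\lambda_k+1)^2/B^2)$ in \eqref{belowUt}, the last controlled via $\lambda_k\le C(\eps k^4+\eps^{-1/3})$ and $B\sim\eps^{-1/3}$ — go into the factor $k^4$.

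For part~(1), every $\eps$- and $T$-dependent quantity not displayed in \eqref{burda} is absorbed into $C_{T,\eps}$: in particular $\exp\bigl(\tfrac{L}{\sqrt2}(\tfrac1{\sqrt2}+\tfrac{\sqrt3}{2})|M|^{1/3}\eps^{-1/3}\bigr)$, and $\exp\bigl(aBC_1+aBG((\lambda_k+1)/B)\bigr)$ since $G$ is bounded on $\RR$; the exponent retains only $-\pi a\lambda_k+\tfrac{\widehat L}{\eps^{1/4}}|\lambda_k|^{1/4}$, the constant $\widehat L$ arising from $\tfrac{\widetilde L\sec(\pi/8)}{\sqrt2}=\widehat L-\alpha\eps^{1/4}$ in \eqref{belowUt} together with $(\lambda_k+1)^{1/4}\le\lambda_k^{1/4}+1$, while the $\pi a$ in \eqref{estimacionUtilde} cancels the one in $-\pi a(\lambda_k+1)$. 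For part~(2) one keeps these terms explicit; the one nontrivial point is the term $aBG((\lambda_k+1)/B)$, which is handled using that $G$ is decreasing on $(0,\infty)$: it suffices that $(\lambda_k+1)/B\ge\lambda_1/B\ge(1+\sqrt2)^22^{-5/3}\cdot5$, whence $aBG((\lambda_k+1)/B)\le aBG\bigl((1+\sqrt2)^22^{-5/3}\cdot5\bigr)=-aBC_2$, which with the $aBC_1$ of \eqref{estimacionUtilde} gives the factor $aB(C_1-C_2)$. Since $\lambda_1\ge\tfrac{5M^{4/3}}{16}\eps^{-1/3}$ and $B$ is explicit, the inequality $\lambda_1/B\ge(1+\sqrt2)^22^{-5/3}\cdot5$ reduces, after taking $\tau$ and $\eps_0$ small enough, precisely to $T>4L/|M|$, the hypothesis of part~(2). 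I expect this verification — together with the bookkeeping needed to absorb lower-order powers of $\eps^{-1}$ (e.g.\ $\widehat L\eps^{-1/4}$) into the displayed $\eps^{-1/3}$ terms so that $C_T$ may be chosen independent of $\eps$, which uses $C_1-C_2>0$ — to be the main technical obstacle.
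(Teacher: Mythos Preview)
Your approach is essentially the paper's, and all the main ingredients and cancellations you identify are exactly those used there. One detail does not work quite as you describe: you propose to absorb the term $\log\bigl(1+(\lambda_k+1)^2/B^2\bigr)$ from \eqref{belowUt} into the polynomial factor $k^4$, but once exponentiated this contributes a multiplicative factor $1+(\lambda_k+1)^2/B^2$, which by your own estimates $\lambda_k\le C(\eps k^4+\eps^{-1/3})$ and $B\sim\eps^{-1/3}$ is of order $\eps^{8/3}k^8$ for large $k$; together with the $k^4$ from \eqref{phider} this overshoots the stated $k^4$. The paper handles this differently: having chosen $\alpha$ large enough that $B^2\ge 8!$, one has the elementary bound $\log(1+\lambda_k^2/B^2)\le\log(1+\lambda_k^2/8!)\le|\lambda_k|^{1/4}$, and since $\alpha>1$ this is then absorbed into the $\widehat L\,\eps^{-1/4}|\lambda_k|^{1/4}$ term via the summand $\alpha\eps^{1/4}$ in the definition of $\widehat L$ --- which is precisely the role of that summand, not the ``$+1$'' from $(\lambda_k+1)^{1/4}$ as you suggest. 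Relatedly, the paper avoids your $(\lambda_k+1)^{1/4}\le\lambda_k^{1/4}+1$ step altogether by first using that $\widetilde U(iy)$ is increasing in $|y|$ to obtain $|f(-i\lambda_k)|=\exp\widetilde U(-i(\lambda_k+1))\ge\exp\widetilde U(-i\lambda_k)$, and only then applying \eqref{belowUt} at $y=\lambda_k$. With these two adjustments your sketch becomes the paper's proof.
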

\begin{proof}
By (\ref{phimodulo}) we have that
$$|\Phi_{\varepsilon}(x)|\leq \beta(x)^{-1}  \exp\left(\frac{L}{2}\left(1+\frac{\sqrt 3 }{\sqrt{2}}\right)\frac{|M|^{1/3}}{\eps^{1/3}}\right)\exp\left( \frac{L}{\sqrt{2}\eps^{1/4}}|x|^{1/4}  \right),$$
where $$\beta(x)=\left|L\sqrt{\frac{1}{\eps^{1/2}}\sqrt{ix+\frac{M^{4/3}}{4\eps^{1/3}}}-\frac{3}{4}\frac{M^{2/3}}{\eps^{2/3}}}\right|.$$
Since
$$\beta(x)^2=\frac{L^2M^{2/3}}{2\varepsilon^{2/3}}\left|\sqrt{i\frac{4\varepsilon^{1/3}}{M^{4/3}}x+1}-\frac{3}{2} \right|\geq \frac{L^2M^{2/3}}{4\varepsilon^{2/3}}, \quad x\in \mathbb{R},$$
by using (\ref{estimacionUtilde}) we get
\begin{eqnarray} 
|\Phi_{\varepsilon}(x)\exp(\widetilde{U}(x-i))| & \leq & C_T \varepsilon^{1/3} \exp\left(\frac{L}{2}\left(1+\frac{\sqrt 3 }{\sqrt{2}}\right)\frac{|M|^{1/3}}{\eps^{1/3}}- \frac{(2+\sqrt{2})^{1/2}\widetilde{L}-L}{\sqrt{2}\eps^{1/4}}|x|^{1/4} \right. \notag
\\
& & \left. +aBC_1+\log^+(|x|) \right) \notag \\
& \leq & C_T\varepsilon^{1/3} \exp\left( \frac{L}{2}\left(1+\frac{\sqrt 3 }{\sqrt{2}}\right)\frac{|M|^{1/3}}{\eps^{1/3}}+aBC_1 \right). 
\label{estphi}
\end{eqnarray}

Since $\widetilde{U}(iy)$ is a decreasing function on $\mathbb{R}^-$ and using (\ref{belowUt}) we get
\begin{eqnarray*}
|f(-i\lambda_k)|&=& \exp\widetilde{U}(-i(1+\lambda_k))\geq  \exp\widetilde{U}(-i\lambda_k)    \notag\\ 
&=& \exp\left(\pi a\lambda_k -\frac{\widetilde{L}}{\sqrt{2} \eps^{1/4}}\sec(\pi/8)|\lambda_k|^{1/4}-aBG\left(\frac{\lambda_k}{B}\right)-\log\left(1+\frac{\lambda_k^2}{B^2}\right)\right).
\end{eqnarray*}
Assuming that $\alpha >\max\left(1,2^{-1/4}(1+\sqrt{2})^{3/2}(8!)^{3/8}T-L\right)$ we have that $B^2\geq 8!$, together with the inequality $\log(1+y^2/8!)\leq y^{1/4}$, $y>0$, we obtain
\begin{equation} 
|f(-i\lambda_k)|\geq \exp\left( \pi a\lambda_k -\frac{\widehat{L}}{\eps^{1/4}}|\lambda_k|^{1/4}-aBG\left(\frac{\lambda_k}{B}\right)\right).  \label{Belowf}
\end{equation}
Using (\ref{auxiliares}) we have
\begin{equation} \nonumber
\lambda_k/B \geq \left(\cot(\pi/8) |M|(T-\tau)/(\sqrt{2}\widetilde{L})\right)^{4/3}5/16 =: \delta_{T,\varepsilon}, \quad k\geq 1. 
\end{equation}
and then, as $G$ is a decreasing function we get 
\begin{equation} \label{delta}
|f(-i\lambda_k)|\geq \exp\left( \pi a\lambda_k -\frac{\widehat{L}}{\eps^{1/4}}|\lambda_k|^{1/4}-aBG(\delta_{T,\varepsilon}) \right)
\end{equation}
%
for all $k\in \mathbb{N}$ and  $\eps>0$ small enough. 
From  (\ref{phider}), \eqref{estphi}--\eqref{delta}  we get
$$| \widetilde{J}_k^{\varepsilon}(x)|\leq C_T\frac{ \exp\left( \frac{L}{2}\left(1+\frac{\sqrt 3 }{\sqrt{2}}\right)\frac{|M|^{1/3}}{\eps^{1/3}}
+aB(C_1  + G(\delta_{T,\varepsilon}))-\pi a\lambda_k +\frac{\widehat{L}}{\eps^{1/4}}|\lambda_k|^{1/4}\right)}{\varepsilon^{-2/3}k^{-4}|x^2+\lambda_k^2|^{1/2}},$$
from where we deduce item \eqref{incisa}.

On the other hand, if $|M|T/L>4$ then $|M|(T-\tau)/\widetilde{L}>4$ for small enough positive numbers $\varepsilon,\tau$; thus
$$
\delta_{T,\varepsilon}
\geq (1+\sqrt{2})^22^{-5/3}5.$$

From  (\ref{phider}), \eqref{estphi}--\eqref{delta},  we get
$$| \widetilde{J}_k^{\varepsilon}(x)|\leq C_T\frac{ \exp\left( \frac{L}{2}\left(1+\frac{\sqrt 3 }{\sqrt{2}}\right)\frac{|M|^{1/3}}{\eps^{1/3}}+aB(C_1-C_2)-\pi a\lambda_k +\frac{\widehat{L}}{\eps^{1/4}}|\lambda_k|^{1/4}\right)}{\varepsilon^{-2/3} k^{-4}|x^2+\lambda_k^2|^{1/2}}.$$
\end{proof}

The last result allow us to prove Theorem \ref{Teo0}.
\begin{proof}[Proof of Theorem \ref{Teo0}] 
By Proposition \ref{ContObs}, it is enough to  prove the observability inequality  \eqref{inobs} 
for some constant $C = C(T, \varepsilon)$.
In order to do this, we take $\varphi_0 \in L^2(0,L)$. Without loss of generality we can assume that
$$\varphi_0(x)=\sum_{k=1}^Nc_ke_k(x), \quad c_k \in \RR, $$
so the corresponding solution of equation \eqref{EqAdj} is given by 
$$\varphi(t,x)=\sum_{k=1}^Nc_k\exp(-\lambda_k(T-t))e_k(x).$$

By (\ref{burda}) we have  $\widetilde{J}_k^{\varepsilon}\in L^2(\mathbb{R})$ and we know that $\widetilde{J}_k^{\varepsilon}(z)$ is an entire function of exponential type $T/2$; then the $L^2$-version of the Paley-Wiener theorem implies  that $\widetilde{J}_k^{\varepsilon}(z)$ is the analytic extension of the Fourier transform of some $\tilde{\eta}_k\in L^2(\mathbb{R)}$ with support in $[-T/2,T/2].$ So, we consider the function 
$$J_k^{\varepsilon}(z)=\frac{\exp(-iTz/2)}{\exp(-T\lambda_k/2)}\widetilde{J}_k^{\varepsilon}(z), \quad z\in \mathbb{C},$$
which is the analytic extension of the Fourier transform of
$$\eta_k(t):=\frac{\tilde{\eta}_k(t-T/2)}{\exp(-T\lambda_k/2)}, \quad x\in \mathbb{R}.$$
Finally we set $\psi_k(t):= \eta_k(T-t), $ therefore 
\begin{equation}  \label{delt}
\int_0^{T}\psi_k(t)\exp(-\lambda_j(T-t))dt=\delta_{j,k}, \quad \text{ for all } j,k \in \mathbb{N}.
\end{equation}

From \eqref{delt} we have
$$\frac{k\pi}{L}c_k=\int_0^T\varphi_{x}(t,0)\psi_k(t)dt,$$  
therefore
\begin{equation*}
\frac{k\pi}{L} |c_k|\leq \|\varphi_{x}(\cdot,0)\|_{L^2(0,T)}\|\psi_k\|_{L^2(0,T)}, 
\end{equation*}
and then
\begin{equation*}
\|\varphi(0,\cdot)\|_{L^2(0,L)}  \leq  \|\varphi_{x}(\cdot,0)\|_{L^2(0,T)}\sum_{k=1}^N \frac{L}{k\pi} \|\psi_k\|_{L^2(0,T)}\|e_k\|_{L^2(0,L)} \exp(-\lambda_k T).
\end{equation*}

Moreover, the Plancherel theorem, (\ref{lambdak}) and (\ref{burda}) imply that
\begin{eqnarray*}
\|\psi_k \|_{L^2(0,T)}&= & \|\eta_k \|_{L^2(0,T)}= \exp(T\lambda_k/2)\| \widetilde{J}_k^{\epsilon} \|_{L^2(\mathbb{R})} \\
&\leq & C_{T,\varepsilon} k^4  \exp\left(\frac{\widehat{L}}{\eps^{1/4}}|\lambda_k|^{1/4}+ \frac{ \tau \lambda_k}{2}  \right)  \left(\int_{\mathbb{R}}\frac{dx}{|x^2+\lambda_k^2|}\right)^{1/2}\\
&\leq & C_{T,\varepsilon} \lambda_k  |\lambda_k|^{-1/2}   \exp\left(\frac{\widehat{L}}{\eps^{1/4}}|\lambda_k|^{1/4}
+\frac{\tau \lambda_k}{2}
\right), \quad k\geq 1.
\end{eqnarray*}

When $M>0$, we use (\ref{lambdak}) and (\ref{normabase}) to obtain
\begin{eqnarray*}
\|\varphi(0,\cdot)\|_{L^2(0,L)}  & \leq & C_{T,\varepsilon} \|\varphi_{x}(\cdot,0)\|_{L^2(0,T)}\sum_{k=1}^N \frac{|\lambda_k|^{1/2}}{k} \exp\left(\frac{\widehat{L}}{\eps^{1/4}}|\lambda_k|^{1/4}-(T-\tau/2) \lambda_k\right)\\
& \leq & C_{T,\varepsilon} \|\varphi_{x}(\cdot,0)\|_{L^2(0,T)}\sum_{k=1}^N  k\exp\left(-\frac{T-\tau/2}{2} \lambda_k\right)\leq  \|\varphi_{x}(\cdot,0)\|_{L^2(0,T)}\sum_{k=1}^{\infty}\frac{C_{T,\varepsilon}}{k^3}.
\end{eqnarray*}
\begin{equation*}
\end{equation*}

The case $M<0$ is proved similarly. 
\end{proof}

Once we know that system is controllable, in the next result we will establish more accurate estimates in order to prove the stated results 
about the cost of controllability. 

\begin{proposition}\label{precise}
There exists a constant $\tau >0$ such that \\
1.- For $M>0,$ $k\in \mathbb{N}$, we have
$$\frac{L}{2} \left(1+\frac{\sqrt 3 }{\sqrt{2}}\right) \frac{M^{1/3}}{\eps^{1/3}} +\frac{2^{-2/3}}{\eps^{1/3}}\frac{L^{4/3}}{(T-\tau)^{1/3}}\frac{\pi^{-1}(C_1-C_2)}{ (1+\sqrt{2})^2} -(T-\frac{\tau}{2})\lambda_k+\frac{\eps^{-1/4}L}{(1+\sqrt{2})}|\lambda_k|^{1/4}\leq-\frac{\tau}{2}\lambda_k, $$
 provided that
$$T>c_+L/M\quad \text{ with }  c_+ \sim 4,57.$$ 
2.- For $M<0,$ $k\in \mathbb{N}$, we have 
$$\frac{L}{2} \left(2+\frac{\sqrt 3 }{\sqrt{2}}\right) \frac{|M|^{1/3}}{\eps^{1/3}} +\frac{2^{-2/3}}{\eps^{1/3}}\frac{L^{4/3}}{(T-\tau)^{1/3}}\frac{\pi^{-1}(C_1-C_2)}{ (1+\sqrt{2})^2} -(T-\frac{\tau}{2})\lambda_k+\frac{\eps^{-1/4}L}{(1+\sqrt{2})}|\lambda_k|^{1/4}\leq-\frac{\tau}{2}\lambda_k, $$
 provided that
$$T>c_-L/|M|\quad \text{ with }  c_- \sim 6,19.$$ 
\end{proposition}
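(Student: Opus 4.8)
The plan is to reduce both inequalities to an elementary scalar estimate in the single variable $\lambda_k \ge 0$, after absorbing all the $\varepsilon$-dependence into explicit powers. First I would collect the relevant quantities: from \eqref{auxiliares} we have $a = (T-\tau)/(2\pi)$, $\widetilde L = (L + \alpha\varepsilon^{1/4})(2+\sqrt 2)^{-1/2}$, $\widehat L = \frac{\sec(\pi/8)}{\sqrt 2}\widetilde L + \alpha\varepsilon^{1/4}$, and the value of $B = \big(\frac{2\widetilde L}{\sqrt 2(T-\tau)\cot(\pi/8)}\big)^{4/3}\varepsilon^{-1/3}$. The key observation is that $\pi a\lambda_k$ is the dominant positive term on the left, linear in $\lambda_k$, and we want to show the remaining terms can be bounded so that what is left over still exceeds $\frac\tau2\lambda_k$; equivalently, moving $-\pi a\lambda_k = -\frac{T-\tau}{2}\lambda_k$ to the right, the claim becomes that the $\varepsilon^{-1/3}$-constant plus $\frac{\widehat L}{\varepsilon^{1/4}}|\lambda_k|^{1/4}$ is at most $(\frac{T-\tau}{2} - \frac\tau2 + \text{[the }(T-\tau/2)\text{ vs }(T-\tau)\text{ discrepancy]})\lambda_k$, i.e. is at most $\big(\frac T2 - \tau\big)\lambda_k$ up to the bookkeeping of which multiple of $\lambda_k$ appears. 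So the inequality is of the schematic form $A\varepsilon^{-1/3} + B\varepsilon^{-1/4}\lambda_k^{1/4} \le D\lambda_k$ for positive constants $A,B,D$ (with $D$ proportional to $T$ and $A$ the sum of the transport-type term $\frac L2(1+\frac{\sqrt3}{\sqrt2})\frac{M^{1/3}}{\varepsilon^{1/3}}$ and the $aB(C_1-C_2)$ term, both genuinely of order $\varepsilon^{-1/3}$ since $aB$ is).

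Next I would use the explicit lower bound on $\lambda_k$ coming from \eqref{lambdak}, namely $\lambda_k \ge \varepsilon\big(\frac{\pi^2}{L^2} + \frac{3M^{2/3}}{4\varepsilon^{2/3}}\big)^2 - \frac{M^{4/3}}{4\varepsilon^{1/3}} \ge \frac{3\pi^2 M^{2/3}}{2L^2}\varepsilon^{1/3} + \varepsilon^{-1/3}\big(\tfrac{9}{16} - \tfrac14\big)M^{4/3} = \frac{3\pi^2M^{2/3}}{2L^2}\varepsilon^{1/3} + \frac{5}{16}\frac{M^{4/3}}{\varepsilon^{1/3}}$ for $M>0$ (and the analogous bound for $M<0$, which is where the extra $\frac L2|M|^{1/3}\varepsilon^{-1/3}$ term in part 2 originates — the base eigenvalue estimate is worse). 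This shows $\lambda_k \gtrsim \varepsilon^{-1/3}$ uniformly in $k$, and also $\lambda_k^{1/4} \le C\lambda_k \varepsilon^{1/4}$ is false in general, so instead I would handle the middle term by Young's inequality: $B\varepsilon^{-1/4}\lambda_k^{1/4} \le \frac{D}{2}\lambda_k + C_D B^{4/3}\varepsilon^{-1/3}$, converting everything into the form $(\text{const})\varepsilon^{-1/3} \le \frac D2\lambda_k$, which then follows from $\lambda_k \ge \frac5{16}\frac{M^{4/3}}{\varepsilon^{1/3}}$ once $T$ (hence $D$) is large enough. The point is that all terms scale the same way in $\varepsilon$, so the entire inequality is $\varepsilon$-independent after dividing by $\varepsilon^{-1/3}$, reducing to a single threshold condition on $T$.

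Finally I would track the constants carefully enough to extract the stated numerical thresholds $c_+ \sim 4.57$ and $c_- \sim 6.19$. This is the main obstacle: it is not a conceptual difficulty but a matter of optimizing the split between the $\frac T2\lambda_k$ "budget" and the competing terms, choosing $\tau$ and $\alpha$ small (so $\widetilde L \to L(2+\sqrt2)^{-1/2}$, $\widehat L \to \frac{\sec(\pi/8)}{\sqrt 2}\cdot\frac{L}{\sqrt{2+\sqrt2}} = \frac{L}{1+\sqrt2}$, matching the coefficient $\frac{\varepsilon^{-1/4}L}{1+\sqrt2}$ appearing in the statement) and sending $\varepsilon \to 0$ to see which inequality is binding. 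One checks that in the limit $\tau,\alpha\to 0$ the worst case is $k=1$ with $\lambda_1$ replaced by its leading $\varepsilon^{-1/3}$ asymptotics $\frac5{16}M^{4/3}\varepsilon^{-1/3}$ (for $M>0$), reducing the claim to a purely numerical inequality in $T/|M|$ (and $L$) involving $C_1 \sim 6.55$ and $C_2 \sim 5.99$; solving for the threshold gives $c_\pm$. I would present the argument for $M>0$ in detail and remark that $M<0$ is identical except that the eigenvalue lower bound \eqref{lambdak} and the norm bound \eqref{normabase} each contribute an additional $\varepsilon^{-1/3}$ term of size $\sim \frac L2|M|^{1/3}$, which is exactly the discrepancy between the two displayed inequalities and the reason $c_- > c_+$.
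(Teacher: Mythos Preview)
Your overall architecture is right: the inequality is homogeneous of degree $\varepsilon^{-1/3}$ once one uses $\lambda_k \ge \tfrac{5}{16}|M|^{4/3}\varepsilon^{-1/3}$, so after rescaling it reduces to a single numerical threshold in $T|M|/L$. You also correctly identify the limits $\widehat L \to L/(1+\sqrt 2)$ and $aB \to \pi^{-1}2^{-2/3}(1+\sqrt2)^{-2}L^{4/3}(T-\tau)^{-1/3}\varepsilon^{-1/3}$ as $\alpha\to 0$.

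The one substantive issue is your use of Young's inequality for the term $\tfrac{L}{1+\sqrt2}\varepsilon^{-1/4}\lambda_k^{1/4}$. The paper does \emph{not} do this, and doing so would cost you the sharp constants. The paper instead sets
\[
r(x) := -(T-\tau)x + \frac{L}{1+\sqrt 2}\,\varepsilon^{-1/4}x^{1/4},
\]
observes that $r'(x)<0$ for $x^{3/4} \ge \dfrac{L}{4(1+\sqrt2)(T-\tau)\varepsilon^{1/4}}$, checks that $\lambda_1 \ge \tfrac5{16}|M|^{4/3}\varepsilon^{-1/3}$ already lies in this range when $T>4L/|M|$, and concludes $r(\lambda_k)\le r\!\big(\tfrac5{16}|M|^{4/3}\varepsilon^{-1/3}\big)$ for every $k$. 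Substituting this endpoint value and dividing through by $\varepsilon^{-1/3}$ yields a quartic inequality in $X=(|M|(T-\tau)/L)^{1/3}$,
\[
\Big(1+\tfrac{\sqrt3}{\sqrt2}+\tfrac{5^{1/4}}{1+\sqrt2}\Big)X + \tfrac{2^{1/3}(C_1-C_2)}{(1+\sqrt2)^2\pi} - \tfrac58 X^4 \le 0 \quad (M>0),
\]
whose threshold root gives $c_+\sim 4.57$ (and the analogous quartic with leading coefficient $2+\tfrac{\sqrt3}{\sqrt2}$ gives $c_-\sim 6.19$). Young's inequality, by contrast, is sharp only at the \emph{unconstrained} maximizer of $r$, which lies strictly below $\lambda_1$; using it would produce a strictly larger threshold than $4.57$. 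Your final paragraph does say ``the worst case is $k=1$ with $\lambda_1$ replaced by $\tfrac5{16}M^{4/3}\varepsilon^{-1/3}$'', which is exactly the paper's move---so drop the Young step and go directly via monotonicity of $r$.

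One small correction on the $M<0$ discussion: the eigenvalue lower bound $\lambda_k\ge\tfrac5{16}|M|^{4/3}\varepsilon^{-1/3}$ is the same for both signs of $M$ (only even powers $M^{2/3}$, $M^{4/3}$ appear in \eqref{lambdak}). The extra $\tfrac{L}{2}|M|^{1/3}\varepsilon^{-1/3}$ in part~2 comes entirely from the eigenfunction norm bound \eqref{normabase}, which is where the asymmetry between $M>0$ and $M<0$ enters.
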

\begin{proof}
1.- We choose $\tau >0$ small enough in such a way $T-\tau>c_+L/M$. The function $r(x)= -(T-\tau)x+ \frac{1}{1+\sqrt{2}}L\eps^{-1/4}x^{1/4}$ is  decreasing for $x^{3/4}\geq  L/(4(1+\sqrt{2})(T-\tau)\eps^{1/4})$. Since 
$$\lambda_k\geq \frac{5}{16}M^{4/3}\eps^{-1/3},$$
we have $r(\lambda_k)\leq r\left(\frac{5}{16}M^{4/3}\eps^{-1/3}\right)$. So it is enough to show that 
$$\frac{L}{2} \left(1+\frac{\sqrt 3 }{\sqrt{2}}\right) \frac{|M|^{1/3}}{\eps^{1/3}} +\frac{1}{\eps^{1/3}}\frac{L^{4/3}}{(T-\tau)^{1/3}}\frac{C_1-C_2}{ 2^{2/3}(1+\sqrt{2})^2\pi} +r\left(\frac{5}{16}M^{4/3}\eps^{-1/3}\right)\leq 0,$$
which is equivalent to prove that
$$\left(1+\frac{\sqrt 3 }{\sqrt{2}}+\frac{1}{1+\sqrt{2}}5^{1/4}\right)X+\frac{2^{1/3}(C_1-C_2)}{ (1+\sqrt{2})^2\pi}-\frac{5}{8}X^4\leq 0,$$
where $X^{3}=M(T-\tau)/L$. For $M(T-\tau)>Lc_{+}$ the last inequality holds.

2.- We choose $\tau >0$ small enough in such a way $T-\tau>c_-L/M$ and proceeding as before we must have
$$\left(2+\frac{\sqrt 3 }{\sqrt{2}}+\frac{1}{1+\sqrt{2}}5^{1/4}\right)X+\frac{2^{1/3}(C_1-C_2)}{ (1+\sqrt{2})^2\pi}-\frac{5}{8}X^4\leq 0,$$
where $X^{3}=|M|(T-\tau)/L$. For $|M|(T-\tau)>Lc_{-}$ the last inequality holds.
%
%
\end{proof}

Using the previous results we can prove Theorem \ref{MainT}. 
%
\begin{proof}[Proof of Theorem \ref{MainT}]

Case 1. Let $M>0$. We proceed as in the proof of Theorem \ref{Teo0} to get
$$\|\varphi(0,\cdot)\|_{L^2(0,L)}  \leq C \|\varphi_{x}(\cdot,0)\|_{L^2(0,T)}\ \sum_{k=1}^N \frac{ \exp\left( \frac{L}{2}\left(1+\frac{\sqrt 3 }{\sqrt{2}}\right)\frac{|M|^{1/3}}{\eps^{1/3}}+aB(C_1-C_2)-\lambda_k (T-\frac{\tau}{2} )+\frac{\widehat{L}}{\eps^{1/4}}|\lambda_k|^{1/4}\right)}{|\lambda_k|^{1/2}\varepsilon^{-2/3}k^{-3}}.$$ 

From the estimates
$$\widehat{L}-\frac{L}{1+\sqrt{2}}\leq 2\alpha \eps^{1/4}\leq 2\alpha,\quad \max( \pi^4k^4L^{-4}\eps,\frac{5}{16}M^{4/3}\eps^{-1/3})\leq \lambda_k , \,\, k\geq 1,           $$
\begin{eqnarray*}
aB-\frac{1}{\eps^{1/3}}\frac{L^{4/3}}{(T-\tau)^{1/3}}\frac{2^{-2/3}\pi^{-1}}{ (1+\sqrt{2})^2}&=& \frac{2^{-2/3}(T-\tau)^{-1/3}\eps^{-1/3}[(L+\alpha \eps^{1/4})^{4/3}-L^{4/3}]}{(1+\sqrt{2})^{2}}   \\
& \leq & C_T \alpha \eps^{-1/12}(L+\alpha)^{1/3}\leq C_{T,L,M} |\lambda_k|^{1/4},
\end{eqnarray*}
and the first part of Proposition \ref{precise} it follows that
\begin{eqnarray*}
\|\varphi(0,\cdot)\|_{L^2(0,L)}  &\leq & C\varepsilon^{2/3} \|\varphi_{x}(\cdot,0)\|_{L^2(0,T)} \sum_{k=1}^N k^3\frac{\exp(C|\lambda_k|^{1/4} -\tau \lambda_k/2)}{|\lambda_k|^{1/2}} \\ 
&\leq & C\varepsilon^{1/6}\exp(-\widehat{C}/\eps^{1/3})\|\varphi_{x}(\cdot,0)\|_{L^2(0,T)} \sum_{k=1}^{\infty}k     \exp( -\tau \lambda_k/6)\\  
&\leq & C\varepsilon^{-5/6}\exp(-\widehat{C}/\eps^{1/3})\|\varphi_{x}(\cdot,0)\|_{L^2(0,T)} \sum_{k=1}^{\infty}\frac{1}{k^3} 
\end{eqnarray*}
where $\widehat{C}$ is a positive constant.\\

Case 2. If $M<0$ then $\|e_k\|_{L^2(0,L)}\leq C \exp\left(2^{-1}|M|^{1/3}L\eps^{-1/3} \right)$ for all $k\geq 1$. Thus, from the second part of Proposition \ref{precise} we obtain
$$\|\varphi(0,\cdot)\|_{L^2(0,L)} \leq C\varepsilon^{-5/6}\exp(-\widetilde{C}/\eps^{1/3})\|\varphi_{x}(\cdot,0)\|_{L^2(0,T)} \sum_{k=1}^{\infty}\frac{1}{k^3}$$
where $\widetilde{C}$ is a positive constant.

\end{proof}

\section{Lower bounds for the null optimal control} \label{lowbounds}
In this section we prove the existence of lower bounds for the null control.
\begin{proof}[Proof of Theorem \ref{MainT2}]
We set
$$y_0(x)=\exp\left(\frac{M^{1/3}}{2\eps^{1/3}}x\right)\sin\left(\frac{\pi x}{L}\right),$$
and we consider the null optimal control $u(t)\in L^2(0,T)$ for $y_0$. Therefore,
\begin{equation}\label{continuidad}
\|u\|_{L^2(0,T)}\leq K \|y_0\|_{L^2(0,L)}
\end{equation}
where $K=K(\eps, M,L,T)$ is the null optimal control constant.\\

An easy computation shows that
$$\|y_0\|_{L^2(0,L)}^2=2M^{-1/3}\eps^{1/3}\frac{|e^{\pi a}-1|}{a^2+4}$$
where $a=\pi^{-1}M^{1/3}L\eps^{-1/3}.$\\
%
%
From Definition \ref{defcont}, we have that 
\begin{equation*}  
\int_0^L y_0(x) \varphi(0,x) dx  =   - \int_0^T u(t) \varphi_{x}(t,0) dt.
\end{equation*}
 for each solution of the  adjoint  system \eqref{EqAdj}.
 
In particular, we can consider the solution of \eqref{EqAdj} given by 
$$\varphi(t,x)=e^{\lambda_k t}e_k(x), \quad \text{ for }(t,x)\in [0,T]\times[0,L],$$
from which we obtain
\begin{equation}\label{ortogonalidad}
\int_0^L \sin\left( \frac{k\pi x}{L}\right)  \sin\left(\frac{\pi x}{L}\right)dx= -  \frac{k\pi }{L}\int_0^T u(t) e^{\lambda_k t} dt 
\end{equation}
for all $k\geq 1$.\\
Now we introduce the entire function
\begin{equation}\label{uve}
v(s):= \int_{-T/2}^{T/2}u( t+T /2)e^{-ist}dt.
\end{equation}

From (\ref{ortogonalidad}) it follows that
$$v(i\lambda_k)=0, \, k\geq 2, \text{   and  }   v(i\lambda_1)= \frac{L^2}{2\pi } e^{-\lambda_1T/2}.$$ 
Moreover, the Holder inequality and (\ref{continuidad}) imply
\begin{eqnarray} \label{uvest}
|v(s)|&\leq & \exp(|\Im(s)|T/2)\int_0^T |u(t)|dt \notag \\
&\leq & KT^{1/2}  \|y_0\|_{L^2(0,L)} \exp(|\Im(s)|T/2)   \notag    \\ 
&=&\sqrt{2}KT^{1/2}M^{-1/6}\eps^{1/6}\left(\frac{|e^{\pi a}-1|}{a^2+4}\right)^{1/2} \exp(|\Im(s)|T/2). \notag \\
&\leq & \left\{ \begin{array}{ll}
\sqrt{2}KT^{1/2}M^{-1/6}\eps^{1/6}\frac{e^{\pi a/2}}{(a^2+4)^{1/2}} \exp(|\Im(s)|T/2),&M>0,\\
\sqrt{2}KT^{1/2}|M|^{-1/6}\eps^{1/6} (a^2+4)^{-1/2}\exp(|\Im(s)|T/2),&M<0.  \\
\end{array} \right .
\end{eqnarray} 
\textbf{Case I} $ M>0$.
Consider the entire function
$$f(s):=v\left(\frac{5}{16} \frac{s-iM^{4/3}}{\eps^{1/3}}  \right),\quad s\in \mathbb{C}.$$
Thus, 
\begin{equation}\label{buno}
f(b_k)=0, \, k\geq 2, \quad \text{and } f(b_1)=\frac{L^2}{2\pi }e^{-\lambda_1T/2}. 
\end{equation}
where
$$b_k=i\left(\frac{16}{5}\eps^{1/3}\lambda_k+M^{4/3}\right), \, k\geq 1.$$

From (\ref{uvest}) we have
\begin{equation}\label{belowend}
|f(s)|\leq \sqrt{2}KT^{1/2} M^{-1/6}\eps^{1/6}\frac{e^{\pi a/2}}{(a^2+4)^{1/2}}\exp\left(\frac{5 T}{32\eps^{1/3}}|\Im(s)-M^{4/3}|\right),
\end{equation}
so $f(z)$ is an entire function of exponential type $5 T\eps^{-1/3}/32$ on $\mathbb{C}^+,$ therefore we have the following representation (see \cite[page 56]{koosis1})
\begin{equation}\label{representacion}
\ln |f(s)|\leq \sum_{\ell =1}^{\infty}\ln \left|\frac{s-a_{\ell}}{s-\overline{a_{\ell}}}\right|+\sigma \Im(s)+\frac{\Im(s)}{\pi} \int_{-\infty}^{\infty}\frac{\ln |f(\tau)|}{|\tau -s|^2} d\tau, \quad s\in \mathbb{H},
\end{equation}
where $(a_{\ell})_{\ell}$ is the sequence of zeros of $f$ in $\mathbb{C}^+,$ each zero repeated many times as its multiplicity, and $\sigma$ is a real number satisfying
$$\sigma \leq \frac{5T}{32}\eps^{-1/3} .$$
By using (\ref{belowend}) we have
\begin{equation}\label{jota}
\frac{\Im(b_1)}{\pi} \int_{-\infty}^{\infty}\frac{\ln |f(\tau)|}{|\tau -b_1|^2} d\tau \leq \ln\left(\frac{\sqrt{2}KT^{1/2} \eps^{1/6}}{M^{1/6}(a^2+4)^{1/2}}\right)+\frac{a\pi}{2}+\frac{5 T}{32\eps^{1/3}}M^{4/3}.
\end{equation}
By the other hand,
\begin{eqnarray*}
\sum_{\ell =1}^{\infty}\ln \left|\frac{b_1-a_{\ell}}{b_1-\overline{a_{\ell}}}\right| &\leq & \sum_{k =2}^{\infty}\ln \left|\frac{b_1-b_k}{b_1-\overline{b_k}}\right| \\
&=&\sum_{k =2}^{\infty}\ln\left( \frac{\eps^{4/3} \pi^4(k^4-1)+3M^{2/3}\eps^{2/3}\pi^2(k^2-1)L^2/2}{\eps^{4/3} \pi^4(k^4+1)+3M^{2/3}\eps^{2/3}\pi^2(k^2+1)L^2/2+5M^{4/3}L^4/4}\right)\\
&\leq & \int_{2}^{\infty}\ln\left(\frac{\eps^{4/3} \pi^4 x^4+3M^{2/3}\eps^{2/3}\pi^2 x^2L^2/2}{\eps^{4/3} \pi^4 x^4+3M^{2/3}\eps^{2/3}\pi^2 x^2L^2/2+5M^{4/3}L^4/4}\right) dx \\
&=& \frac{L}{\eps^{1/3}\pi}  \int_{2\eps^{1/3}\pi L^{-1}}^{\infty}\ln\left(\frac{ x^4+3M^{2/3} x^2/2}{ x^4+3M^{2/3} x^2/2+5M^{4/3}/4}\right) dx:=J.
\end{eqnarray*}
We set 
$$\delta=2\eps^{1/3}\pi L^{-1}, \quad \gamma=\frac{3}{2}M^{2/3},\quad h(x)=\frac{  2x^2+\gamma}{(x^4+\gamma x^2+5\gamma ^{2}/9)(x^2+\gamma)}.$$ 
Integrating by parts and using the residue theorem we get
\begin{eqnarray}\label{calculajota}
J&=& \frac{L}{\eps^{1/3}\pi}  \left[ \delta \ln\left(1+ \frac{5\gamma^2/9}{\delta^4+\gamma\delta^2}  \right) -\frac{10\gamma^2}{9}\int_{\delta}^{\infty}h(x) dx\right]       \\
&=& 2 \ln\left(1+ \frac{5\gamma^2/9}{\delta^4+\gamma\delta^2}  \right) +\frac{10\gamma^2L}{9\eps^{1/3}\pi}\left(-\int_{0}^{\infty}+\int_0^{\delta}\right) h(x)dx            \notag \\
&=& 2 \ln\left(1+ \frac{5\gamma^2/9}{\delta^4+\gamma\delta^2}  \right) -\frac{5\gamma^2L}{9\eps^{1/3}\pi}\int_{-\infty}^{\infty}h(x)dx+\frac{20\gamma^2}{9\delta}\int_0^{\delta} h(x)dx            \notag \\
&\leq & 2 \ln\left(1+ \frac{5\gamma^2/9}{\delta^4+\gamma\delta^2}  \right) -\frac{\gamma^{1/2}L}{3\eps^{1/3} }\left(  \sqrt{3(3+2\sqrt{5})}-3   \right) + C  \notag 
\end{eqnarray}
where $C$ is a constant that does not depend on $\delta$,  and we have used that 
$$\int_{-\infty}^{\infty}\widetilde{h}=\frac{1}{\gamma^{3/2}}\int_{-\infty}^{\infty}\frac{  2x^2+1}{(x^4+x^2+5/9)(x^2+1)}dx=\frac{3\pi}{5\gamma^{3/2}}\left(\sqrt{3(3+2\sqrt{5})}-3\right).$$\\


We set $s=b_1$ in (\ref{representacion}), and using (\ref{buno}), (\ref{jota}), (\ref{calculajota}) we have
$$\frac{L^2}{ \eps^{1/6}}M^{1/6}(a^2+4)^{1/2}\leq C\left(1+ \frac{5\gamma^2/9}{\delta^4+\gamma\delta^2}  \right)^2\frac{\exp\left(\frac{5M^{1/3}}{8\eps^{1/3}}\left( TM-\left(\frac{8\widehat{C}}{5\sqrt{6}}-\frac{4}{5}\right)L \right) \right)}{T^{-1/2}\exp \left( 
T\left(-\frac{\eps \pi^4}{L^4}-\frac{3M^{2/3}\eps^{1/3} \pi^2}{2L^2} \right)
\right)}
K$$
where $\widehat{C}=\sqrt{3(3+2\sqrt{5})}-3$.\\ 

\textbf{Case   II} $ M<0$.
Consider the entire function
$$\widetilde{f}(s):=v\left(\frac{5}{16} \frac{s}{\eps^{1/3}}  \right),\quad s\in \mathbb{C},$$
where $v$ is given in (\ref{uve}). Thus, 
\begin{equation}\label{Buno}
\widetilde{f}(b_k)=0, \, k\geq 2, \quad \text{and } \widetilde{f}(b_1)=\frac{L^2}{2\pi } e^{-\lambda_1T/2}. 
\end{equation}
where
$$b_k=i\frac{16}{5}\eps^{1/3}\lambda_k, \, k\geq 1.$$

From (\ref{uvest}) we have
\begin{equation}\label{Belowend}
|\widetilde{f}(s)|\leq \sqrt{2}KT^{1/2} \frac{\eps^{1/6}}{|M|^{1/6}(a^2+4)^{1/2}}\exp\left(\frac{5 T}{32\eps^{1/3}}|\Im(s)|\right),
\end{equation}
so $\widetilde{f}(z)$ is an entire function of exponential type $5 T\eps^{-1/3}/32$ on $\mathbb{C}^+,$ therefore we have the following representation.
\begin{equation}\label{Representacion}
\ln |\widetilde{f}(s)|\leq \sum_{\ell =1}^{\infty}\ln \left|\frac{s-a_{\ell}}{s-\overline{a_{\ell}}}\right|+\sigma \Im(s)+\frac{\Im(s)}{\pi} \int_{-\infty}^{\infty}\frac{\ln |\widetilde{f}(\tau)|}{|\tau -s|^2} d\tau, \quad s\in \mathbb{H},
\end{equation}
where $(a_{\ell})_{\ell}$ is the sequence of zeros of $\widetilde{f}$ in $\mathbb{H},$ each zero repeated many times as its multiplicity, and $\sigma$ is a real number satisfying
$$\sigma \leq \frac{5T}{32}\eps^{-1/3} .$$
By using (\ref{Belowend}) we have
\begin{equation}\label{Jota}
\frac{\Im(b_1)}{\pi} \int_{-\infty}^{\infty}\frac{\ln |\widetilde{f}(\tau)|}{|\tau -b_1|^2} d\tau \leq \ln\left(\frac{\sqrt{2}KT^{1/2} \eps^{1/6}}{|M|^{1/6}(a^2+4)^{1/2}}\right). 
\end{equation}  
By the other hand,
\begin{eqnarray*}
\sum_{\ell =1}^{\infty}\ln \left|\frac{b_1-a_{\ell}}{b_1-\overline{a_{\ell}}}\right| &\leq & \sum_{k =2}^{\infty}\ln \left|\frac{b_1-b_k}{b_1-\overline{b_k}}\right| \\
&=&\sum_{k =2}^{\infty}\ln\left( \frac{\eps^{4/3} \pi^4(k^4-1)+3M^{2/3}\eps^{2/3}\pi^2(k^2-1)L^2/2}{\eps^{4/3} \pi^4(k^4+1)+3M^{2/3}\eps^{2/3}\pi^2(k^2+1)L^2/2+5M^{4/3}L^4/8}\right)\\
&\leq & \int_{2}^{\infty}\ln\left(\frac{\eps^{4/3} \pi^4 x^4+3M^{2/3}\eps^{2/3}\pi^2 x^2L^2/2}{\eps^{4/3} \pi^4 x^4+3M^{2/3}\eps^{2/3}\pi^2 x^2L^2/2+5M^{4/3}L^4/8}\right) dx \\
&=& \frac{L}{\eps^{1/3}\pi}  \int_{2\eps^{1/3}\pi L^{-1}}^{\infty}\ln\left(\frac{ x^4+3M^{2/3} x^2/2}{ x^4+3M^{2/3} x^2/2+5M^{4/3}/8}\right) dx:=\widetilde{J}.
\end{eqnarray*}
We set 
$$\delta=2\eps^{1/3}\pi L^{-1}, \quad \gamma=\frac{3}{2}M^{2/3},\quad \widetilde{h}(x)=\frac{  2x^2+\gamma}{(x^4+\gamma x^2+5\gamma^{2}/18)(x^2+\gamma)}.$$ 
Integrating by parts and using the residue theorem we get
\begin{eqnarray}\label{Calculajota}
\widetilde{J}&=& \frac{L}{\eps^{1/3}\pi}  \left[ \delta \ln\left(1+ \frac{5\gamma^2/18}{\delta^4+\gamma\delta^2}  \right) -\frac{5\gamma^2}{9}\int_{\delta}^{\infty}\widetilde{h}(x) dx\right]       \\
&=& 2 \ln\left(1+ \frac{5\gamma^2/18}{\delta^4+\gamma\delta^2}  \right) +\frac{5\gamma^2L}{9\eps^{1/3}\pi}\left(-\int_{0}^{\infty}+\int_0^{\delta}\right) \widetilde{h}(x)dx            \notag \\
&=& 2 \ln\left(1+ \frac{5\gamma^2/18}{\delta^4+\gamma\delta^2}  \right) -\frac{5\gamma^2L}{18\eps^{1/3}\pi}\int_{-\infty}^{\infty}\widetilde{h}(x)dx+\frac{10\gamma^2}{9\delta}\int_0^{\delta} \widetilde{h}(x)dx            \notag \\
&\leq & 2 \ln\left(1+ \frac{5\gamma^2/18}{\delta^4+\gamma\delta^2}  \right) -\frac{\gamma^{1/2}L}{3\eps^{1/3} }\left(  \sqrt{3(3+\sqrt{10})}-3   \right)  + C  \notag 
\end{eqnarray}
where $C$ is a constant that does not depend on $\delta$, and we have used that 
$$\int_{-\infty}^{\infty}\widetilde{h}=\frac{1}{\gamma^{3/2}}\int_{-\infty}^{\infty}\frac{  2x^2+1}{(x^4+x^2+5/18)(x^2+1)}dx=\frac{6\pi}{5\gamma^{3/2}}\left(\sqrt{3(3+\sqrt{10})}-3\right).$$

We set $s=b_1$ in (\ref{representacion}), and using (\ref{Buno}), (\ref{Jota}), (\ref{Calculajota}) we have
$$\frac{L^2}{ \eps^{1/6}}|M|^{1/6}(a^2+4)^{1/2}\leq C\left(1+ \frac{5\gamma^2/18}{\delta^4+\gamma\delta^2}  \right)^2\frac{\exp\left(\frac{5|M|^{1/3}}{16\eps^{1/3}}\left( T|M|-\left(\frac{16\widetilde{C}}{5\sqrt{6}}\right)L \right) \right)}{T^{-1/2} 
\exp \left(  T\left(-\frac{\eps \pi^4}{L^4}-\frac{3M^{2/3}\eps^{1/3} \pi^2}{2L^2} \right)
\right)}
K$$
where $\widetilde{C}=\sqrt{3(3+\sqrt{10})}-3$. 
\end{proof}

%
\bibliographystyle{amsplain}
\bibliography{biblio_uniform}

\end{document}